\newcommand{\C}{\mathbb{C}}
\newcommand{\Z}{\mathbb{Z}}
\newcommand{\N}{\mathbb{N}}
\newcommand{\op}{\operatorname}
\newtheorem{theorem}{Theorem}[section]
\newtheorem{corollary}[theorem]{Corollary}
\newtheorem{question}[theorem]{Question}
\newtheorem{proposition}[theorem]{Proposition}
\newtheorem{definition}[theorem]{Definition}
\newtheorem{definition/lemma}[theorem]{Definition/Lemma}
\title[GIT and stretched Kostka quasi-polynomials]{Geometric invariant theory and stretched Kostka quasi-polynomials}
\author{Marc Besson}
\address{YMSC, Tsinghua University, Beijing, P.R. China, 100871}
\email{bessonm@tsinghua.edu.cn}
\author{Sam Jeralds}
\address{University of Sydney, Camperdown, NSW 2006}
\email{samuel.jeralds@sydney.edu.au}
\author{Joshua Kiers}
\address{Epic Systems Corporation, Verona, WI 53593}
\email{jkiers@epic.com}
\begin{document}
\maketitle
\begin{abstract}
For $G$ a semisimple, simply-connected complex algebraic group and two dominant integral weights $\lambda, \mu$, we consider the dimensions of weight spaces $V_\lambda(\mu)$ of weight $\mu$ in the irreducible, finite-dimensional highest weight $\lambda$ representation. For natural numbers $N$, the function $N \mapsto \dim V_{N\lambda}(N\mu)$ is a quasi-polynomial in $N$, the stretched Kostka quasi-polynomial. Using methods of geometric invariant theory (GIT), we realize the degree of this quasi-polynomial as the dimension of a certain GIT quotient. As a result, we resolve a conjecture of Gao and Gao on an explicit formula for this degree. We also discuss periods of this quasi-polynomial determined by the GIT approach, and give computational evidence supporting a geometric determination of the minimal period. 
\end{abstract}

\section{Introduction} \label{Intro}

\subsection{Stretched Kostka quasi-polynomials and their degrees}
Fix a positive natural number $n$ and two partitions $\lambda, \mu \vdash n$. For two such partitions, the Kostka number $K_{\lambda, \mu}$ plays a central role in algebraic combinatorics and its interaction with representation theory. Among its many formulations, $K_{\lambda, \mu}$ encodes the number of semistandard Young tableaux of shape $\lambda$ and content $\mu$, the decomposition of the Schur function $s_\lambda$ into the monomial basis, the decomposition of permutation modules for the symmetric group $S_n$ into Specht modules, and dimensions of weight spaces in irreducible polynomial representations of general linear groups, to name a few incarnations.  

While the values of Kostka numbers can vary wildly depending on the partitions $\lambda, \mu$, the nonvanishing of $K_{\lambda, \mu}$ is captured by a textbook combinatorial criterion: $K_{\lambda, \mu} \neq 0 \iff \mu \trianglelefteq \lambda$, where $\trianglelefteq$ denotes the \emph{dominance order} on partitions (cf. \cite{Ful}*{\S 2.2}). Among its many features, the dominance order is preserved under scaling; that is, for any integer partitions $\lambda, \mu$ and $N \geq 1$ we have that $K_{\lambda, \mu} \neq 0 \iff K_{N\lambda, N\mu} \neq 0$. A goal, then, in studying the behavior of Kostka numbers is to understand the function $K_{\lambda, \mu}(\cdot): \N \to \N$ given by 
$$
K_{\lambda, \mu}(N):= K_{N\lambda, N\mu};
$$
this is a polynomial, as proven in \cite{KR}, which we will refer to as the \emph{stretched Kostka polynomial}. Using the realization of $K_{\lambda, \mu}(N)$ as the Ehrhart polynomial of a certain Gelfand--Tsetlin polytope $GT_{\lambda, \mu}$, McAllister \cite{McAll} computed the degree of $K_{\lambda, \mu}(N)$, resolving a conjecture of King--Tollu--Toumazet \cite{KTT}. 

Now, let $G$ be a semisimple, simply-connected complex algebraic group, and fix a choice of maximal torus and Borel subgroup $T \subset B \subset G$. Associated to this choice, we get a root system $\Phi$ with positive roots $\Phi^+$, and integral characters (or weights) $X^\ast(T)$ with dominant integral characters (or weights) $X^\ast(T)^+$. To any $\lambda \in X^\ast(T)^+$, we can associate to $G$ the finite-dimensional, irreducible representation $V_\lambda$ with highest weight $\lambda$. For $\mu \in X^\ast(T)^+$, we can consider the $\mu$ weight space of $V_\lambda$, which we denote by $V_\lambda(\mu)$. Generalizing the realization of the Kostka numbers as dimensions of weight spaces for representations of general linear groups, we set 
$$
K^G_{\lambda, \mu} = K_{\lambda, \mu}= \dim(V_\lambda(\mu));
$$
we abuse notation and drop the dependence on $G$. Like the classical Kostka numbers, there is a straightforward criterion for when $K_{\lambda, \mu} \neq 0$ given by the dominance order on weights, whereby
$$
K_{\lambda, \mu} \neq 0 \iff \mu \leq \lambda \iff \lambda-\mu \in Q^+:= \Z_{\geq 0}(\Phi^+).
$$ 
This condition again is invariant under scaling, and we can consider the function $K_{\lambda, \mu}(\cdot): \N \to \N$ given by 
$$
K_{\lambda, \mu}(N):= K_{N\lambda, N\mu}=\dim V_{N\lambda}(N\mu).
$$
Unlike the type A case, $K_{\lambda, \mu}(N)$ is now in general a \emph{quasi-polynomial} in $N$:

\begin{definition}
A function $f:\N\to \N$ is a \emph{quasi-polynomial} if there exist a positive integer $d \geq 1$ and polynomials $p_0, p_1, \dots, p_{d-1}$ such that $f(N) = p_i(N)$ for all $N \equiv i \  \mod d$.
We call such an integer $d$ a \emph{period} of the quasi-polynomial. Note that $d$ need not be minimal. 

The \emph{degree} of $f$ is the highest degree of its constituent polynomials $p_0, p_1, \dots, p_{d-1}$. 
\end{definition}

We call $K_{\lambda,\mu}(N)$ the \emph{stretched Kostka quasi-polynomial}. 

Again using Ehrhart theory, now applied to Berenstein--Zelevinsky polytopes $BZ_{\lambda, \mu}$, Gao and Gao \cite{GG} extended McAllister's work and gave a uniform formula for the degree of $K_{\lambda, \mu}(N)$ for classical types, as follows.

\begin{theorem}{\cite{GG}*{Theorem 1.2}}\label{GGClassical} 
Let $G$ be a complex, semisimple, simply-connected algebraic group of classical type with root system $\Phi$, and let $\lambda, \mu \in X^\ast(T)^+$ be two dominant weights such that $\lambda-\mu=\sum_i c_i \alpha_i$ with each $c_i \in \mathbb{Z}_{\geq 0}$, in terms of the simple roots $\alpha_i$ of $G$. Also write $\lambda=\sum_i d_i \varpi_i$ in terms of the fundamental weights. Then the degree of $K_{\lambda, \mu}(N)$ is 
$$
\frac{1}{2} |\Phi^{(1)}|-\mathrm{rank}(\Phi^{(1)})-\frac{1}{2}|\Phi^{(2)}|,
$$
where $\Phi^{(1)}$ and $\Phi^{(2)}$ are the root subsystems determined by 
$$
\Phi^{(1)} = \mathrm{span}\{ \alpha_i | c_i \neq 0\}, \ \Phi^{(2)} = \mathrm{span}\{ \alpha_i | c_i \neq 0, \ d_i =0\}.
$$
\end{theorem}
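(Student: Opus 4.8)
The plan is to reinterpret $K_{\lambda,\mu}(N)$ as the Hilbert quasi-polynomial of a geometric invariant theory quotient of a flag variety by the maximal torus, and then to compute the dimension of that quotient. Let $P_\lambda\supseteq B$ be the parabolic attached to $\lambda$ and $\mathcal L_\lambda$ the ample line bundle on $G/P_\lambda$ with $H^0\!\left(G/P_\lambda,\mathcal L_\lambda^{\otimes N}\right)\cong V_{N\lambda}$ as $G$-modules. Giving $\mathcal L_\lambda$ its natural $T$-linearization twisted by the character $-\mu$, one has $\dim H^0\!\left(G/P_\lambda,\mathcal L_\lambda^{\otimes N}\right)^T=\dim\!\left(V_{N\lambda}\otimes\C_{-N\mu}\right)^T=\dim V_{N\lambda}(N\mu)=K_{\lambda,\mu}(N)$. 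Since $\bigoplus_N H^0\!\left(G/P_\lambda,\mathcal L_\lambda^{\otimes N}\right)^T$ is, by definition, the homogeneous coordinate ring of the GIT quotient $Y:=(G/P_\lambda)/\!\!/_T$, and $Y$ is a nonempty projective variety exactly when $\lambda-\mu\in Q^+$, this gives $\deg K_{\lambda,\mu}(N)=\dim Y$. It remains to compute $\dim Y$.

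The essential subtlety is that $\mu$ generally lies on a proper face of the $T$-moment polytope $\operatorname{conv}(W\lambda)$ of $(G/P_\lambda,\mathcal L_\lambda)$, so that there are no properly stable points and the naive count $\dim(G/P_\lambda)-\operatorname{rank}(\Phi)$ overshoots. Writing $\lambda-\mu=\sum_i c_i\alpha_i$ and setting $S_1=\{i:c_i\neq0\}$, the description $\operatorname{conv}(W\lambda)\cap X^\ast(T)^+_{\mathbb R}=\{\nu\text{ dominant}:\lambda-\nu\in\sum_i\mathbb R_{\geq0}\alpha_i\}$ shows that $\mu$ lies in the relative interior of the face $F=\operatorname{conv}(W_{S_1}\lambda)$, namely the face maximizing the dominant coweight $\gamma_0:=\sum_{i\notin S_1}\varpi_i^\vee$. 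First I would use the Hilbert--Mumford criterion to show that a point $[v]\in G/P_\lambda$ is $T$-semistable if and only if its component $v_{\mathrm{top}}$ in the top $\gamma_0$-graded piece of $V_\lambda$ --- an irreducible module of highest weight $\lambda$ for the Levi $L_{S_1}=Z_G(\gamma_0)$ --- is nonzero and $T$-semistable in its projectivization.

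Next I would contract onto the Levi. For a semistable $[v]$, running $\gamma_0$ degenerates $[v]$ to $[v_{\mathrm{top}}]$ inside the semistable locus, and the ``top part'' map has affine fibers; hence every semistable point is $S$-equivalent to a point of the sub-flag-variety $L_{S_1}\cdot[v_\lambda]=L_{S_1}/(P_\lambda\cap L_{S_1})$, the fibers are contracted in $Y$, and $Y\cong\big(L_{S_1}/(P_\lambda\cap L_{S_1})\big)/\!\!/_T$. Passing to the semisimple group $G'=(L_{S_1})_{\mathrm{der}}$ (root system $\Phi^{(1)}$), whose central torus in $L_{S_1}$ acts trivially on the flag variety, and writing $T'$ for a maximal torus of $G'$ (of rank $\operatorname{rank}(\Phi^{(1)})$), the relevant variety is $Z=G'/P'$ where $P'$ has Levi with root system $\Phi_{L_\lambda}\cap\Phi^{(1)}=\Phi^{(2)}$; thus $\dim Z=|\Phi^{(1),+}|-|\Phi^{(2),+}|=\tfrac12|\Phi^{(1)}|-\tfrac12|\Phi^{(2)}|$. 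The image $\mu|_{T'}$ lies in the interior of $\operatorname{conv}(W'\lambda)$: by the same face description it lies in the relative interior of the face cut out by $\{i\in S_1:c_i=0\}=\emptyset$ --- i.e. of the whole polytope --- which is full-dimensional in $X^\ast(T')_{\mathbb R}$ because no simple factor $\Phi_0$ of $\Phi^{(1)}$ can be orthogonal to $\lambda$: otherwise $\beta=\sum_{\alpha_i\in\Phi_0}c_i\alpha_i$ would be a strictly positive combination of simple roots which is anti-dominant for $\Phi_0$ (since $\langle\mu,\alpha_i^\vee\rangle\geq0$ while $\langle\lambda,\alpha_i^\vee\rangle=0$ for $\alpha_i\in\Phi_0$), and pairing with $\rho^\vee$ and using positivity of the inverse Cartan matrix rules this out. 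Hence $Z$ carries $T'$-stable points, so $\dim Y=\dim Z-\operatorname{rank}(T')=\tfrac12|\Phi^{(1)}|-\tfrac12|\Phi^{(2)}|-\operatorname{rank}(\Phi^{(1)})$, which is exactly the claimed degree.

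The hard part will be the analysis of the semistable locus carried out in the second and third steps: identifying it correctly when $\mu$ lies on the boundary of the moment polytope, and checking that the lower-order affine directions genuinely collapse in the quotient. This is precisely the mechanism that produces the $\Phi^{(2)}$-correction, which has no counterpart in the properly stable situation --- there one would simply obtain $\tfrac12|\Phi|-\tfrac12|\Phi_{L_\lambda}|-\operatorname{rank}(\Phi)$. Finally, since none of this argument uses the type of $\Phi$, I expect the ``classical type'' hypothesis to be inessential.
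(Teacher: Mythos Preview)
Your approach is correct and shares its core with the paper's, but the reduction step is handled differently. The paper does \emph{not} analyze the non-primitive case inside GIT at all: it simply invokes the reduction of Gao--Gao \cite{GG}*{Proposition 3.4, Corollary 3.5}, which says (representation-theoretically) that $K^G_{\lambda,\mu}(N)=\prod K^{G_k}_{\lambda_k,\mu_k}(N)$ over the simple factors of the Levi $L_{S_1}$, each factor now a \emph{simple primitive} pair. Only then does it enter GIT, and only for the primitive case: it shows directly via Hilbert--Mumford that $(G/P_\lambda)^s(\mathbb L)\neq\varnothing$ (your final step), whence $\dim Y=\dim(G/P_\lambda)-\operatorname{rank}(\Phi)$. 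The identification $\deg K_{\lambda,\mu}=\dim Y$ is also handled more carefully in the paper than in your sketch: rather than appealing to Hilbert--Serre for the invariant ring, it chooses $k$ with $\mathbb L^{\otimes k}$ descending to an ample $\mathbb M$, pushes down each $\mathbb L^{\otimes j}$ to a rank-one reflexive sheaf $\mathcal F_j$ (hence of full support), and applies asymptotic Riemann--Roch together with Teleman's $\mathrm H^\ast(X,\mathbb L)^T\cong\mathrm H^\ast(Y,\pi^T_\ast\mathbb L)$ to see that \emph{each} residue class gives a polynomial of degree exactly $\dim Y$.

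What you do instead is carry out the Levi reduction \emph{geometrically}: locate $\mu$ on the face of $\operatorname{conv}(W\lambda)$ maximizing $\gamma_0=\sum_{i\notin S_1}\varpi_i^\vee$, degenerate semistable points along $\gamma_0$ into the sub-flag variety $L_{S_1}/(P_\lambda\cap L_{S_1})$, and argue that this induces $Y\cong (G'/P')/\!\!/ T'$ with $G'=(L_{S_1})_{\mathrm{der}}$. This is a genuinely different route to the same reduction; it is more self-contained (you never leave GIT and do not need to import \cite{GG}'s result) and it explains geometrically why the correction term $\tfrac12|\Phi^{(2)}|$ appears, at the cost of the extra work you flag in your last paragraph. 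Note that the ring-level statement $\bigoplus_N V_{N\lambda}(N\mu)^\ast\cong\bigoplus_N V'_{N\lambda}(N\mu)^\ast$ already follows from the elementary fact that a weight space $V_\lambda(\mu)$ with $\lambda-\mu\in\sum_{i\in S_1}\Z\alpha_i$ lies entirely in the $L_{S_1}$-submodule generated by $v_\lambda$; this would let you bypass the S-equivalence/contraction analysis if you wished, and is effectively what the paper's citation of \cite{GG} amounts to. Your closing observation that nothing is type-dependent is exactly the paper's point: the argument establishes the formula for arbitrary semisimple $G$.
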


\noindent They further conjectured that this degree formula holds for arbitrary semisimple $G$ \cite{GG}*{Conjecture 1.3}. The first goal of the present work is to prove this conjecture.

As part of their proof for classical types, Gao and Gao make use the following special subclasses of pairs of dominant integral weights $(\lambda, \mu)$, defined in \cite{GG}*{\S 2.1}. 

\begin{definition} For any semisimple, simply-connected $G$, a pair of dominant integral weights $(\lambda, \mu)$ with $\lambda-\mu=\sum_{i} c_i \alpha_i$ is called 
\begin{enumerate}
\item \emph{primitive} if $c_i \neq 0$ for all $i$, and
\item \emph{simple primitive} if the pair is primitive and group $G$ is simple. 
\end{enumerate}
\end{definition}

A crucial observation is that, to compute the degree of $K_{\lambda, \mu}(N)$, one can reduce to the case when $(\lambda, \mu)$ is simple primitive \cite{GG}*{Proposition 3.4, Corollary 3.5}. With this reduction, the formula for the degree of $K_{\lambda, \mu}(N)$ simplifies to the following, which confirms their conjecture (Theorem \ref{MT1Restated} in the text).

\begin{theorem} \label{MainTheorem1}
Let $G$ be a simple, simply-connected complex algebraic group, and $(\lambda, \mu)$ a primitive pair of dominant integral weights. Write $\lambda = \sum_i d_i \varpi_i$ in the basis of fundamental weights, with $d_i \geq 0$. Then $K_{\lambda, \mu}(N)$ is a quasi-polynomial of degree
$$
\frac{1}{2}|\Phi|-\frac{1}{2}|\Phi'| - \mathrm{rank}(\Phi),
$$
where $\Phi'$ is the sub-root system given by $\mathrm{span}\{ \alpha_i | d_i = 0\} \subseteq \Phi$.
\end{theorem}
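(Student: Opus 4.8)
The plan is to realize the generating ring $\bigoplus_{N\ge 0}V_{N\lambda}(N\mu)$ as (a ring whose Proj is) a GIT quotient, so that $\deg K_{\lambda,\mu}(N)$ becomes the dimension of that quotient. Let $S=\{i:d_i=0\}$, let $P=P_S\subseteq G$ be the associated standard parabolic subgroup, and write $\Phi_S=\Phi'$ for its sub-root system, so that $\dim(G/P)=|\Phi^+|-|\Phi_S^+|=\tfrac12(|\Phi|-|\Phi'|)$. By the Borel--Weil theorem, $\mathcal L_\lambda$ is very ample on $G/P$ and $\bigoplus_{N\ge 0}H^0(G/P,\mathcal L_{N\lambda})\cong\bigoplus_{N\ge 0}V_{N\lambda}$ as graded $G$-algebras, and one may fix conventions so that the $T$-weights occurring in degree $N$ are exactly the weights of $V_{N\lambda}$. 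The maximal torus $T$ acts on this algebra preserving the grading, and
\[
R:=\bigoplus_{N\ge 0}V_{N\lambda}(N\mu)=\bigoplus_{N\ge 0}H^0\bigl(G/P,\mathcal L_{N\lambda}\bigr)^{T\text{-weight }N\mu}
\]
is, by definition, the ring whose Proj is the GIT quotient $Y:=(G/P)/\!\!/_{\mu}\,T$ for the $T$-action on $G/P$ linearized in $\mathcal L_\lambda$ and twisted by the character $\mu$; by construction $\dim R_N=K_{\lambda,\mu}(N)$.

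Next: since $(\lambda,\mu)$ is primitive, $\lambda-\mu$ is a positive combination of the simple roots, hence $\lambda-\mu\in Q^+$, so $K_{\lambda,\mu}(N)\ge1$ for $N\ge1$ and $Y$ is a nonempty projective variety (in fact $\mu$ lies in the relative interior of the weight polytope $\operatorname{conv}(W\lambda)$, which is the $T$-momentum polytope of $(G/P,\mathcal L_\lambda)$). As $R$ is a finitely generated graded domain, and $K_{\lambda,\mu}(N)=\dim R_N$ is already known to be quasi-polynomial in $N$, its degree equals the degree of the Hilbert function of $R$, namely $\dim\operatorname{Proj}(R)=\dim Y$. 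So the theorem reduces to the assertion
\[
\dim Y=\dim(G/P)-\operatorname{rank}(\Phi).
\]

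The key geometric input --- which I would isolate as a lemma --- is that \emph{$T$ acts on $G/P$ with finite generic stabilizer}. Granting it: the semistable locus is a nonempty open, hence dense, subset of the irreducible variety $G/P$, so a general semistable point has finite $T$-stabilizer, and therefore the nonempty GIT quotient $Y$ has the expected dimension $\dim(G/P)-\dim T=|\Phi^+|-|\Phi_S^+|-\operatorname{rank}(\Phi)=\tfrac12|\Phi|-\tfrac12|\Phi'|-\operatorname{rank}(\Phi)$, which is the claimed degree. To prove the lemma I would restrict to the big cell $U\subseteq G/P$, an affine space on which $T$ acts linearly with weights exactly the roots in $\Phi^+\setminus\Phi_S^+$; the $T$-stabilizer of a point of $U$ with all coordinates nonzero is then $\bigcap_{\alpha\in\Phi^+\setminus\Phi_S^+}\ker(\alpha)$, which is finite precisely when $\Phi^+\setminus\Phi_S^+$ spans $X^\ast(T)_{\Q}$. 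Here $\lambda\ne 0$ (otherwise no primitive pair with $K_{\lambda,\mu}(N)\not\equiv 0$ exists), so $S$ is a proper subset of the nodes, and $G$ simple means the Dynkin diagram is connected. For $j\notin S$ one has $\alpha_j\in\Phi^+\setminus\Phi_S^+$; for $i\in S$, pick $j\notin S$ at minimal distance from $i$, with geodesic $i=v_0,v_1,\dots,v_k=j$, and observe that $\beta:=\sum_{m=0}^{k}\alpha_{v_m}$ and $\beta':=\sum_{m=1}^{k}\alpha_{v_m}$ are positive roots (the sum of the simple roots over a connected subdiagram is a root) not lying in $\Phi_S$ (as $v_k=j\notin S$), whence $\alpha_i=\beta-\beta'\in\operatorname{span}(\Phi^+\setminus\Phi_S^+)$. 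This exhibits the required spanning.

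The main obstacle is exactly this lemma, and the role of simplicity that it encodes: for a non-simple $G$ a factor of $T$ can act trivially on $G/P$, so the generic $T$-stabilizer is positive-dimensional and $\dim Y$ drops accordingly --- this is the geometric shadow of the extra term $-\tfrac12|\Phi^{(2)}|$ in Theorem~\ref{GGClassical}, and the reason Gao--Gao first reduce to simple primitive pairs. The remaining ingredients are routine: the Borel--Weil bookkeeping relating the $\mu$-weight space of $V_{N\lambda}$ to spaces of sections (a harmless twist by $-w_0$ that changes neither side of the formula), the fact that the Hilbert function of a finitely generated graded domain $R$ over $\C$ has degree $\dim\operatorname{Proj}(R)$, and the standard identity $\dim\bigl((G/P)/\!\!/_\mu T\bigr)=\dim(G/P)-\operatorname{rank}(\Phi)$ for a nonempty GIT quotient by a torus with finite generic stabilizers.
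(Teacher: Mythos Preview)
Your proposal is correct and arrives at the same GIT quotient $Y=(G/P)/\!\!/_{\mathbb{L}}T$, but the route to $\dim Y=\dim(G/P)-\operatorname{rank}(\Phi)$ differs from the paper's. The paper exhibits an explicit stable point by taking $\mathring g P$ with $\mathring g v_\lambda$ supported on every weight of $V_\lambda$ and then verifies the Hilbert--Mumford inequality $\mu^{\mathbb L}(\mathring gP,\delta)>0$ directly from the primitive hypothesis $\lambda-\mu=\sum_i c_i\alpha_i$ with all $c_i>0$; nonemptiness of the stable locus then gives the dimension via Corollary~\ref{GITdim}. You instead prove an intrinsic lemma---that the generic $T$-stabilizer on $G/P$ is finite---by a clean root-system argument on the big cell, using only that $G$ is simple and $P\neq G$; primitivity enters for you only through $\mu\in\operatorname{int}\operatorname{conv}(W\lambda)$, to guarantee $X^{ss}\neq\varnothing$. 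Both are valid: your lemma isolates the role of simplicity and is reusable for any $\mu$, while the paper's Hilbert--Mumford computation is more self-contained and yields stability (not just finite stabilizers) in one stroke. For the quasi-polynomiality itself the paper works harder---it descends $\mathbb L^{\otimes k}$ to the quotient, applies asymptotic Riemann--Roch there, and invokes Teleman's cohomology comparison---and the payoff is an explicit period $k$, hence Proposition~\ref{MainTheorem2}. Your appeal to the Hilbert function of the graded domain $R$ is shorter and entirely adequate for the degree statement, but does not see the period. One small point worth tightening: the implication ``a general semistable point has finite stabilizer, therefore $\dim Y=\dim X-\dim T$'' is true but not immediate from the definitions---either note that your interior-of-moment-polytope observation actually forces such a point to be \emph{stable} (so Corollary~\ref{GITdim} applies), or invoke $K(Y)=K(X)^T$ together with Rosenlicht's theorem.
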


\begin{corollary}
Theorem \ref{GGClassical} holds for all semisimple $G$. 
\end{corollary}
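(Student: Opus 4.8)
The plan is to deduce this purely formally from Theorem \ref{MainTheorem1} together with the reductions of Gao--Gao \cite{GG}*{Proposition 3.4, Corollary 3.5}, which are combinatorial in nature and carry no restriction on the type of $G$. First I would record the two compatibility facts that make the reduction run. \emph{Multiplicativity over simple factors:} since $G$ is simply-connected, it is an honest direct product of its simple factors $G_1,\dots,G_k$, so $V_{N\lambda}\cong\bigotimes_j V_{N\lambda_j}$ and hence $K_{\lambda,\mu}(N)=\prod_j K_{\lambda_j,\mu_j}(N)$, giving $\deg K_{\lambda,\mu}(N)=\sum_j \deg K_{\lambda_j,\mu_j}(N)$; correspondingly $\Phi=\bigsqcup_j \Phi_j$ is an orthogonal union, the simple roots (and therefore the conditions $c_i\neq 0$ and $d_i=0$) partition among the factors, and the invariants $|\Phi|$, $\mathrm{rank}(\Phi)$, $|\Phi^{(1)}|$, $|\Phi^{(2)}|$ are all additive over the $\Phi_j$. \emph{Primitivity normalization:} if $(\lambda,\mu)$ is primitive, so $c_i\neq 0$ for all $i$, then $\Phi^{(1)}=\mathrm{span}\{\alpha_i\}=\Phi$ and $\Phi^{(2)}=\mathrm{span}\{\alpha_i : d_i=0\}=\Phi'$, so the Gao--Gao expression $\tfrac12|\Phi^{(1)}|-\mathrm{rank}(\Phi^{(1)})-\tfrac12|\Phi^{(2)}|$ becomes exactly $\tfrac12|\Phi|-\mathrm{rank}(\Phi)-\tfrac12|\Phi'|$, the formula of Theorem \ref{MainTheorem1}.

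Given these, the argument proceeds as follows. Starting from an arbitrary semisimple $G$ and arbitrary dominant $\lambda,\mu$ with $\lambda-\mu=\sum_i c_i\alpha_i$, apply \cite{GG}*{Proposition 3.4, Corollary 3.5} to pass to the sub-root system $\Phi^{(1)}$ and split into simple factors, replacing the data by a collection of simple primitive pairs without changing the degree of $K_{\lambda,\mu}(N)$; by the primitivity normalization the Gao--Gao value for the original data equals the sum, over these simple factors, of the right-hand side of Theorem \ref{MainTheorem1}. By Theorem \ref{MainTheorem1} that sum computes $\deg K_{\lambda,\mu}(N)$ for each simple primitive constituent, and by multiplicativity it computes $\deg K_{\lambda,\mu}(N)$ for their product, hence for the original $G$. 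This is precisely the statement that Theorem \ref{GGClassical} holds for all semisimple $G$, in particular settling \cite{GG}*{Conjecture 1.3}.

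The only points requiring attention are bookkeeping ones: confirming that Gao--Gao's reduction steps use no feature special to the classical types — they are manipulations of weight data and of Levi subgroups, so this is routine — and tracking how $\Phi^{(1)},\Phi^{(2)}$ behave under the passage to $\Phi^{(1)}$ (which may output a reducible, merely semisimple root system, so that a further application of multiplicativity is needed) and under the splitting into simple factors. There is no genuine obstacle: all of the substantive content, namely the GIT computation of the degree for simple primitive pairs, is already contained in Theorem \ref{MainTheorem1}, and the corollary is a formal consequence.
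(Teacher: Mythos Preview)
Your proposal is correct and matches the paper's approach exactly: the corollary is stated in the paper without an explicit proof, as it is an immediate consequence of Theorem \ref{MainTheorem1} together with the Gao--Gao reduction \cite{GG}*{Proposition 3.4, Corollary 3.5} to simple primitive pairs. You have simply written out in detail the bookkeeping (multiplicativity over simple factors, and the observation that for primitive pairs $\Phi^{(1)}=\Phi$ and $\Phi^{(2)}=\Phi'$) that the paper leaves implicit in the paragraph preceding Theorem \ref{MainTheorem1}.
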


\subsection{Geometric Invariant Theory and quasi-polynomiality} \label{GITIntro}
We assemble a uniform proof of Theorem \ref{MainTheorem1} using techniques from geometric invariant theory (GIT). The application of GIT methods to representation-theoretic questions is now well-established in the literature, with particular focus on problems involving the branching of representations and the behavior of weight multiplicities. Without any hope of being exhaustive, we highlight \cite{Kumar2}, \cite{Belkale}, \cite{BJK}, \cite{Sherm}, and references therein for examples of these techniques and related results. 

Broadly, the proof of Theorem \ref{MainTheorem1} consists of connecting the degree of the stretched Kostka quasi-polynomial $K_{\lambda, \mu}(N)$ to the dimension of a certain GIT quotient of a partial flag variety $G/P_\lambda$. The restriction to pairs $(\lambda, \mu)$ which are simple primitive greatly simplifies this dimension computation and gives a geometric interpretation to the degree formulas conjectured in \cite{GG}.

Further, the geometric perspective afforded by the GIT constructions gives rise not just to information on the degree of $K_{\lambda, \mu}(N)$ but also to its periods. In particular, for simple primitive pairs $(\lambda, \mu)$ and the corresponding line bundle $\mathbb{L}$ (cf. Equation (\ref{linebundef})) there is some minimal integer $d \geq 1$ such that $\mathbb{L}^{\otimes d}$ \emph{descends} to the GIT quotient. As an easy observation during the proof of Theorem \ref{MainTheorem1}, we can conclude the following proposition.

\begin{proposition} \label{MainTheorem2}
Let $G$ be a simple, simply-connected complex algebraic group, and let $(\lambda, \mu)$ be a primitive pair of dominant integral weights.  Let $d \geq 1$ be minimal such that $\mathbb{L}^{\otimes d}$ descends to the GIT quotient. Then $d$ is a period of the stretched Kostka quasi-polynomial $K_{\lambda, \mu}(N)$. 
\end{proposition}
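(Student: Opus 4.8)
The plan is to identify the stretched Kostka quasi-polynomial $N \mapsto \dim V_{N\lambda}(N\mu)$ with a Hilbert function on a GIT quotient and then invoke Ehrhart-type reciprocity for line bundles on the quotient. First I would recall the standard GIT setup associated to the pair $(\lambda, \mu)$: the weight $\mu$ determines a one-parameter subgroup (via the pairing with a suitable element, after Weyl-group normalization) acting on $G/P_\lambda$, and the choice of $\lambda$ gives an ample $G$-linearized line bundle $\mathbb{L} = \mathbb{L}_\lambda$ on $G/P_\lambda$ whose space of sections $H^0(G/P_\lambda, \mathbb{L}^{\otimes N})$ is $V_{N\lambda}^\ast$ by Borel--Weil. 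Taking the $\C^\ast$ (or $T$) weight-$N\mu$ subspace of sections recovers $\dim V_{N\lambda}(N\mu)$; in GIT language, this is $\dim H^0\bigl(G/P_\lambda, \mathbb{L}^{\otimes N}\bigr)^{\C^\ast\text{-wt } N\mu}$, i.e. the dimension of the space of invariant sections after twisting $\mathbb{L}$ by the character $-\mu$. Under the linearization twisted by $\mu$, by primitivity of $(\lambda,\mu)$ the semistable locus is nonempty and the GIT quotient $X := (G/P_\lambda)^{ss}/\!\!/\C^\ast$ is a projective variety.

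Next I would invoke the descent criterion (Kempf's descent lemma / the Drézet--Narasimhan-type statement): there is a minimal $d \geq 1$ such that the twisted bundle $\mathbb{L}^{\otimes d}$ — viewed with its $\C^\ast$-linearization of weight $d\mu$ — has trivial stabilizer action along the semistable locus and hence descends to an ample line bundle $\overline{\mathbb{L}}$ on $X$. The key point is then that for every $N$ divisible by $d$, pushing forward gives
$$
\dim V_{N\lambda}(N\mu) \;=\; \dim H^0\bigl(X, \overline{\mathbb{L}}^{\otimes N/d}\bigr),
$$
because taking $\C^\ast$-invariant sections commutes with the good quotient map and higher cohomology vanishes on the semistable locus in the relevant range (or one passes to the associated graded / uses that the function is eventually the Hilbert function). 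Since $\overline{\mathbb{L}}$ is ample on the projective variety $X$, the right-hand side is a genuine polynomial in $N/d$ for $N \in d\Z_{\geq 0}$ large, and in fact the Hilbert polynomial agrees with the Hilbert function of $X$ for all $N/d \geq 0$ after accounting for rational singularities of $X$; thus $K_{\lambda,\mu}(N)$ restricted to each residue class $i \bmod d$ agrees with a single polynomial. That is exactly the statement that $d$ is a period.

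I would then need to handle the residue classes $i \not\equiv 0 \pmod d$ uniformly. For these, $\mathbb{L}^{\otimes i}$ does not descend, but $\mathbb{L}^{\otimes i} \otimes \mathbb{L}^{\otimes dk}$ for $k \geq 0$ is $\mathbb{L}^{\otimes i}$ twisted by a descending bundle, so its $\C^\ast$-invariant sections form a module over $\bigoplus_k H^0(X, \overline{\mathbb{L}}^{\otimes k})$; this module is the section module of a coherent (in fact, by the local structure of GIT, a line-bundle-like) sheaf $\mathcal{F}_i$ on $X$, and $N \mapsto \dim H^0(X, \mathcal{F}_i \otimes \overline{\mathbb{L}}^{\otimes (N-i)/d})$ is eventually polynomial in $N$ of the same degree $\dim X$, giving the constituent polynomial $p_i$. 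Combining, $K_{\lambda,\mu}$ is quasi-polynomial with period $d$.

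The main obstacle I anticipate is the cohomology-vanishing / agreement-on-the-nose issue: ensuring $\dim V_{N\lambda}(N\mu)$ equals $\dim H^0(X, \overline{\mathbb{L}}^{\otimes N/d})$ for \emph{all} $N$ in the residue class, not merely asymptotically, so that the period statement is clean. This requires knowing that $X$ has rational singularities (so that Hilbert function equals Hilbert polynomial from degree $0$ up, via Kawamata--Viehweg-type vanishing for the ample $\overline{\mathbb{L}}$), together with a Kempf-style vanishing statement for $H^{>0}$ of the twisted bundles on $G/P_\lambda$ in the semistable locus. Since the proposition is stated only as an \emph{easy observation} accompanying the degree computation, the expectation is that this vanishing and the rational-singularities input are either already available in the GIT literature cited (\cite{Kumar2}, \cite{BJK}) or follow because $G/P_\lambda$ is a flag variety and the quotient of such by a torus with nonempty stable locus inherits good singularities; the work is just in citing and assembling these, not in proving them afresh.
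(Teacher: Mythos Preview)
Your overall strategy matches the paper's: realize $K_{\lambda,\mu}(N)$ as a dimension of invariant sections on $G/P_\lambda$, descend $\mathbb{L}^{\otimes d}$ to an ample line bundle $\mathbb{M}$ on the GIT quotient, push the non-descending powers $\mathbb{L}^{\otimes j}$ forward to coherent (indeed rank-one reflexive) sheaves $\mathcal{F}_j$ on the quotient, and invoke polynomiality of $n\mapsto\chi(\mathcal{F}_j\otimes\mathbb{M}^{\otimes n})$ via asymptotic Riemann--Roch (Snapper). Two points deserve correction or sharpening.

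First, the quotient is by the full maximal torus $T$, not by a $\C^\ast$. The weight $\mu$ does not determine a one-parameter subgroup acting on $G/P_\lambda$; it enters only as a character twist of the $T$-linearization, $\mathbb{L}=L_\lambda\otimes\C_\mu$, so that $\mathrm{H}^0(G/P,\mathbb{L}^{\otimes N})\cong(\C_{-N\mu}\otimes V_{N\lambda})^\ast$ as $T$-modules and the $T$-invariants pick out exactly $V_{N\lambda}(N\mu)$. Your parenthetical ``(or $T$)'' is the right object; the $\C^\ast$ framing should be dropped.

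Second, the obstacle you flag---agreement of Hilbert function and Hilbert polynomial for \emph{all} $N$, to be handled via rational singularities of the quotient and Kawamata--Viehweg vanishing---is not how the paper proceeds, and your proposed route would require nontrivial additional work. The paper instead invokes Teleman's ``quantization commutes with reduction'' result (Proposition~\ref{cohomcompare}):
\[
\mathrm{H}^\ast\bigl((G/P)//_{\mathbb{L}}T,\ \pi_\ast^T(\mathbb{L}^{\otimes m})\bigr)\ \cong\ \mathrm{H}^\ast(G/P,\mathbb{L}^{\otimes m})^T
\]
in every cohomological degree. The right-hand side has no higher cohomology by Borel--Weil on the flag variety itself, so the Euler characteristic on the quotient equals $h^0$ on the nose for every $m\ge 0$. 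Hence the Riemann--Roch polynomial for $\chi(\mathcal{F}_j\otimes\mathbb{M}^{\otimes n})$ computes $\dim V_{(j+dn)\lambda}((j+dn)\mu)$ exactly, with no ``eventually'' caveat and no appeal to singularities of the quotient. Once you replace your vanishing discussion with this citation, the remainder of your argument is the paper's proof.
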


In Section \ref{Descent-Examples}, we discuss how this integer $d$ should be determinable from the data of $G$, $\lambda$, and $\mu$, and using a result of \cite{Kumar1} give an explicit determination of this period in the case when $\mu=0$. We also propose a variation of this result for general $\mu$ (see Question \ref{PeriodConjecture}) and give some computational support for this proposal.

\subsection{Outline of the paper} In Section \ref{GITbasics}, we collect the relevant definitions and results from GIT which will be needed for the remainder of the paper. We include some basic exposition (with references) for the convenience of the reader, but make no effort to be complete. In Section \ref{FlagGIT}, we apply these methods to the case of flag varieties, building up the proof of Theorem \ref{MainTheorem1}, with the key geometric ingredient being the asymptotic Riemann--Roch theorem. Finally, in Section \ref{Descent-Examples} we discuss the descent lattice for GIT quotients of flag varieties by line bundles, its relationship to Proposition \ref{MainTheorem2}, and give some explicit examples of the stretched Kostka quasi-polynomials for various types.

\subsection*{Acknowledgements} 
We thank Shrawan Kumar for helpful discussions on GIT quotients of flag varieties, and for outlining the argument of Proposition \ref{nonemptystable}, which greatly simplified and improved our exposition.

\section{Essentials of Geometric Invariant Theory} \label{GITbasics}

In this section, we collect the key features and results of geometric invariant theory (GIT) that will be used in this paper. We do not venture to give a complete treatment of this broad and well-established field; instead, we refer to the standard classical reference \cite{GIT} or the approachable treatment as in \cite{Dolgachev} for full details. We limit ourselves presently to the case when $X$ is a smooth, irreducible, complex projective variety with an action of a connected reductive algebraic group $H$ (to distinguish from the group $G$ as in the introduction), although we will later specialize to the case of a maximal torus $H=T$, which is the only relevant case for us.

\subsection{Stability and GIT quotients} 

Let $X$ and $H$ be as above, and let $\mathbb{L}$ be an ample, $H$-linearized line bundle on $X$. Recall the following standard definitions. 

\begin{definition} A point $x \in X$ is called 
\begin{enumerate}
\item \emph{semistable} with respect to $\mathbb{L}$ if there exists some $n >0$ and section $\sigma \in \mathrm{H}^0(X, \mathbb{L}^{\otimes n})^H$ such that $\sigma(x) \neq 0$. We denote the set of all such semistable points by $X^{ss}(\mathbb{L})$. 
\item \emph{stable} with respect to $\mathbb{L}$ if it is semistable, the isotropy subgroup $H_x$ is finite, and all $H$-orbits in $X_\sigma:=\{y \in X: \sigma(y) \neq 0\}$ are closed. We denote the set of all such stable points by $X^{s}(\mathbb{L})$; note that naturally $X^{s}(\mathbb{L}) \subseteq X^{ss}(\mathbb{L})$. 
\item \emph{unstable} with respect to $\mathbb{L}$ if it is not semistable. 

\end{enumerate}
\end{definition}

In general, taking a naive quotient of the variety $X$ by its $H$-action is poorly behaved. However, the utility of GIT is that we can get much better control on quotients by instead considering the sets $X^{ss}(\mathbb{L})$ or $X^{s}(\mathbb{L})$ of semistable or stable points, respectively, for an ample line bundle. This is made precise in the following construction and theorem regarding the GIT quotient (cf. \cite{Dolgachev}*{Theorem 8.1, Proposition 8.1}).


\begin{theorem} \label{GITquotient} For $X$, $H$, and $\mathbb{L}$ as above, set 
$$
X //_{\mathbb{L}} H:= \mathrm{Proj} \left( \bigoplus_{n \geq 0} \mathrm{H}^0(X, \mathbb{L}^{\otimes n})^H \right),
$$
the \emph{GIT quotient} of $X$ by $H$. Then 
\begin{enumerate}
\item $X //_{\mathbb{L}} H$ is a projective variety, and the map $\pi: X^{ss}(\mathbb{L}) \to X //_{\mathbb{L}} H$ associated to the inclusion 
$$\bigoplus_{n \geq 0} \mathrm{H}^0(X, \mathbb{L}^{\otimes n})^H \hookrightarrow \bigoplus_{n \geq 0} \mathrm{H}^0(X, \mathbb{L}^{\otimes n})$$ 
is a good categorical quotient. 

\item There is an ample line bundle $\mathbb{M}$ on $X //_{\mathbb{L}} H$ such that $\pi^\ast(\mathbb{M}) = \mathbb{L}^{\otimes m}|_{X^{ss}(\mathbb{L})}$ for some $m \geq 0$. 

\item There is an open subset $W \subseteq X //_{\mathbb{L}} H$ such that $X^{s}(\mathbb{L})= \pi^{-1}(W)$, and the restriction $\pi: X^{s}(\mathbb{L}) \to W$ is a geometric quotient of $X^{s}(\mathbb{L})$ by $H$. 

\end{enumerate}
\end{theorem}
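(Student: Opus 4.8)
The plan is to build the quotient locally out of affine GIT and glue. First I would invoke finite generation: since $\mathbb{L}$ is ample, $R:=\bigoplus_{n\ge 0}\mathrm{H}^0(X,\mathbb{L}^{\otimes n})$ is a finitely generated graded $\C$-algebra with $X\cong\mathrm{Proj}(R)$, and since $H$ is reductive (hence linearly reductive in characteristic zero) the invariant subring $R^H$ is again finitely generated by Nagata's theorem. Therefore $X //_{\mathbb{L}} H=\mathrm{Proj}(R^H)$ is a projective variety, and the inclusion of graded rings $R^H\hookrightarrow R$ induces a morphism from the locus where some positive-degree invariant section is nonvanishing, which is exactly $X^{ss}(\mathbb{L})$; this is $\pi$.

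Next I would analyze $\pi$ locally. Choose homogeneous generators $\sigma_1,\dots,\sigma_r$ of $R^H$ of positive degrees; the basic open sets $X_{\sigma_j}$ cover $X^{ss}(\mathbb{L})$, each is affine with coordinate ring $A_j:=\bigl(R_{(\sigma_j)}\bigr)_0$ (degree-zero part of the localization), and $\pi|_{X_{\sigma_j}}$ is the morphism $\mathrm{Spec}(A_j)\to\mathrm{Spec}(A_j^{H})$ dual to $A_j^H\hookrightarrow A_j$. Here one checks that forming $H$-invariants commutes with localizing at the invariant element $\sigma_j$ and with extracting the degree-zero part, so that $A_j^H=\bigl((R^H)_{(\sigma_j)}\bigr)_0$ is precisely the corresponding affine chart of $\mathrm{Proj}(R^H)$. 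The affine input — that for $H$ reductive acting on $\mathrm{Spec}(A)$ the map $\mathrm{Spec}(A)\to\mathrm{Spec}(A^H)$ is a good categorical quotient: $H$-invariant, surjective, identifies $A^H$ with the regular functions on the target, separates disjoint closed $H$-invariant subsets, and has a unique closed orbit in each fiber — is the classical theorem of Nagata/Mumford. Because these local quotients are canonical they agree on the overlaps $X_{\sigma_j}\cap X_{\sigma_k}=X_{\sigma_j\sigma_k}$, and gluing gives that $\pi:X^{ss}(\mathbb{L})\to X //_{\mathbb{L}} H$ is a good categorical quotient, establishing (1). For (2), the only subtlety is that $R^H$ need not be generated in degree one; I would pass to a Veronese subring, choosing $m\ge 1$ divisible by the degrees of a finite generating set so that $(R^H)^{(m)}=\bigoplus_{n\ge 0}\mathrm{H}^0(X,\mathbb{L}^{\otimes mn})^H$ is generated in degree one. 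Then $X //_{\mathbb{L}} H\cong\mathrm{Proj}\bigl((R^H)^{(m)}\bigr)$, the bundle $\mathbb{M}:=\mathcal{O}(1)$ on this Proj is a genuine ample line bundle, and unwinding the local charts shows $\pi^{\ast}\mathbb{M}=\mathbb{L}^{\otimes m}|_{X^{ss}(\mathbb{L})}$.

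For (3), set $W:=\pi(X^{s}(\mathbb{L}))$. The crux is that $X^{s}(\mathbb{L})$ is \emph{saturated} for $\pi$, i.e.\ $\pi^{-1}(W)=X^{s}(\mathbb{L})$: if $x$ is stable and $y\in X^{ss}(\mathbb{L})$ lies in the same fiber, then in an affine chart $\overline{H\cdot y}$ must meet the unique closed orbit of that fiber, which is $\overline{H\cdot x}=H\cdot x$ by stability; since $H\cdot x$ has maximal dimension among orbits in the chart this forces $y\in H\cdot x$, and stability is orbit-invariant. Saturation makes $W$ open (its preimage is the open set $X^{s}(\mathbb{L})$), and over $W$ every geometric fiber of $\pi$ is a single orbit with finite stabilizer; combined with (1) this says $\pi|_{X^{s}(\mathbb{L})}:X^{s}(\mathbb{L})\to W$ is a geometric quotient.

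\textbf{Main obstacle.} The heaviest inputs are finite generation of $R^H$ and the structural properties of the affine quotient $\mathrm{Spec}(A)\to\mathrm{Spec}(A^H)$ — its surjectivity and the unique-closed-orbit/separation property — both of which genuinely use reductivity of $H$ (the Reynolds operator). Granting these, the remainder is bookkeeping: the invariants–localization compatibility, the gluing of local quotients, the Veronese manipulation for (2), and the saturation argument for (3). Among these I expect the saturation of the stable locus in (3) to be the one step requiring a genuinely geometric argument — the behavior of orbit closures and the maximality of the dimension of a stable orbit — rather than pure commutative algebra.
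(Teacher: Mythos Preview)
The paper does not actually prove this theorem: it is stated as a standard background result, with a parenthetical citation to \cite{Dolgachev}*{Theorem 8.1, Proposition 8.1} (and implicitly \cite{GIT}) in lieu of any argument. Your sketch is a faithful outline of the classical proof one finds in those references---finite generation of $R^H$ via reductivity, affine GIT on the charts $X_{\sigma_j}$, a Veronese passage for the ample $\mathbb{M}$, and saturation of the stable locus---so there is nothing to compare against beyond noting that you have supplied what the paper deliberately omits.

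One small remark on your saturation argument in (3): with the paper's definition of stability (there exists an invariant section $\sigma$ with $\sigma(x)\neq 0$ such that \emph{all} $H$-orbits in $X_\sigma$ are closed), the argument is even cleaner than you wrote. If $\pi(y)=\pi(x)$ then $y$ lies in the same affine chart $X_\sigma$, where by hypothesis every orbit is already closed; hence the unique closed orbit in the fiber is simultaneously $H\cdot x$ and $H\cdot y$, forcing $y\in H\cdot x$ without needing the dimension comparison. Your version is also fine, but tailoring it to the paper's chosen definition would streamline it.
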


By virtue of $\pi(X^{s}(\mathbb{L}))$ being a geometric quotient and open in the GIT quotient, Theorem \ref{GITquotient}(3) has the following corollary, which will be of key importance for us. 

\begin{corollary} \label{GITdim} Suppose that $X^{s}(\mathbb{L})$ is nonempty. Then $\dim \left(X //_{\mathbb{L}} H\right) = \dim(X)-\dim(H)$.
\end{corollary}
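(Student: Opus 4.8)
The plan is to compare dimensions across the geometric quotient $\pi \colon X^{s}(\mathbb{L}) \to W$ furnished by Theorem \ref{GITquotient}(3). First I would record that $X^{ss}(\mathbb{L})$, and hence $X^{s}(\mathbb{L})$, are open subsets of $X$: semistability is an open condition, being defined by the non-vanishing of some invariant section, and stability imposes only further open conditions. Since $X$ is irreducible and $X^{s}(\mathbb{L})$ is assumed nonempty, it is dense in $X$, so $\dim X^{s}(\mathbb{L}) = \dim X$.

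Next I would check that the GIT quotient is itself irreducible. The full section ring $\bigoplus_{n \geq 0} \mathrm{H}^0(X, \mathbb{L}^{\otimes n})$ is an integral domain because $X$ is irreducible and $\mathbb{L}$ is ample; hence so is its subring of $H$-invariants, and the $\mathrm{Proj}$ of a graded domain is irreducible. The set $W$ is open and nonempty, since $\pi^{-1}(W) = X^{s}(\mathbb{L}) \neq \emptyset$, so $\dim W = \dim\!\left(X //_{\mathbb{L}} H\right)$.

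Now I would invoke the theorem on dimension of fibers for the dominant (indeed surjective) morphism $\pi \colon X^{s}(\mathbb{L}) \to W$ of irreducible varieties. Because $\pi$ is a geometric quotient, its fibers are exactly the $H$-orbits contained in $X^{s}(\mathbb{L})$; and because every stable point $x$ has finite isotropy $H_x$, each such orbit has dimension $\dim H - \dim H_x = \dim H$. Thus \emph{every} fiber of $\pi$ has dimension $\dim H$, and the fiber-dimension theorem yields $\dim X^{s}(\mathbb{L}) = \dim W + \dim H$. Combining the three facts, $\dim X = \dim X^{s}(\mathbb{L}) = \dim W + \dim H = \dim\!\left(X //_{\mathbb{L}} H\right) + \dim H$, which rearranges to the assertion.

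The only delicate point is making the fiber-dimension count airtight: one must know that $W$ is irreducible and that the orbit-fibers all have dimension exactly $\dim H$, not merely generically. Both follow from the geometric-quotient property together with finiteness of the stabilizers on the stable locus, so I expect this to be a routine verification rather than a genuine obstacle; the substantive input has already been packaged into Theorem \ref{GITquotient}.
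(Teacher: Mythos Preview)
Your argument is correct and is exactly the reasoning the paper has in mind: it states the corollary as an immediate consequence of Theorem~\ref{GITquotient}(3), citing only that $\pi(X^{s}(\mathbb{L}))$ is a geometric quotient and open in $X//_{\mathbb{L}}H$, without spelling out the fiber-dimension count. Your write-up simply makes explicit the steps (openness of the stable locus in irreducible $X$, irreducibility of the quotient, and $\dim H$-dimensional orbit-fibers from finite isotropy) that the paper leaves to the reader.
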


\subsection{Hilbert--Mumford stability criterion} Let $\delta: \C^\ast \to H$ be a one-parameter subgroup (OPS) of $H$. Then since $X$ is projective, for any OPS $\delta$ and $x \in X$ the limit point 
$$
x_0:=\lim_{t \to 0} \delta(t).x
$$
exists in $X$. Via $\delta$, the fiber $\mathbb{L}_{x_0}$ has an induced $\C^\ast$-action, given by some integer $r$ such that $\delta(t).z = t^rz$ for $t \in \C^\ast$, $z \in \mathbb{L}_{x_0}$. Following the conventions of \cite{GIT}, we set 
$$
\mu^{\mathbb{L}}(x, \delta):= r,
$$
the Mumford number for the pair $(x, \delta)$. The Hilbert--Mumford criterion, given in the following proposition, gives a concrete numerical condition for determining the (semi)stability of a point $x \in X$ with respect to $\mathbb{L}$. 

\begin{proposition} \label{HilbMum}
A point $x \in X$ is semistable (respectively, stable) with respect to $\mathbb{L}$ if and only if $\mu^{\mathbb{L}}(x, \delta) \geq 0$ (respectively, $\mu^{\mathbb{L}}(x, \delta) >0$) for all non-constant OPSs $\delta$. 
\end{proposition}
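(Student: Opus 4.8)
The statement is the classical Hilbert--Mumford numerical criterion, and the plan is to reduce it to the ``linear'' case of a rational representation of $H$ and then dictionary-translate weights into Mumford numbers; this is the argument of \cite{GIT}*{Ch.~2}, which one could also simply cite.

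\textbf{Reduction to a linear action.} Using ampleness of $\mathbb{L}$, fix $k \gg 0$ so that $\mathbb{L}^{\otimes k}$ is very ample; its $H$-linearization gives an $H$-equivariant closed embedding $X \hookrightarrow \P(V)$ with $V := \mathrm{H}^0(X, \mathbb{L}^{\otimes k})^\ast$ carrying the induced linear $H$-action and $\mathbb{L}^{\otimes k} \cong \mathcal{O}_{\P(V)}(1)|_X$ as $H$-linearized bundles. Lift $x$ to a nonzero vector $\hat x$ in the affine cone $\hat X \subseteq V$. The standard translations are then: $x$ is semistable with respect to $\mathbb{L}$ iff $0 \notin \overline{H\cdot\hat x}$ in $V$ (an invariant section of $\mathbb{L}^{\otimes nk}$ nonvanishing at $x$ is exactly an invariant homogeneous polynomial on $V$ nonvanishing at $\hat x$, and, by reductivity of $H$, invariant polynomials separate the disjoint closed $H$-stable sets $\{0\}$ and $\overline{H\hat x}$ precisely when these are disjoint); and $x$ is stable with respect to $\mathbb{L}$ iff $H\cdot\hat x$ is closed in $V$ and $\mathrm{Stab}_H(\hat x)$ is finite. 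Since $\mu^{\mathbb{L}^{\otimes k}}(x,\delta) = k\,\mu^{\mathbb{L}}(x,\delta)$, it suffices to treat $\mathbb{L}^{\otimes k}$, i.e. work inside $\P(V)$.

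\textbf{Weights versus Mumford numbers.} For a non-constant OPS $\delta$, decompose $\hat x = \sum_i v_i$ into $\delta$-eigenvectors, $\delta(t).v_i = t^{m_i}v_i$, and put $m(x,\delta) := \min\{m_i : v_i \neq 0\}$. Then $\delta(t).\hat x = \sum_i t^{m_i}v_i$, the limit $x_0 = \lim_{t\to 0}\delta(t).x$ in $\P(V)$ is the class of $\hat x_0 := \sum_{m_i = m(x,\delta)} v_i$, and $\delta$ scales the tautological line $\C\hat x_0$ by $t^{m(x,\delta)}$, hence acts on $\mathcal{O}(1)_{x_0} = (\C\hat x_0)^\ast$ by $t^{-m(x,\delta)}$. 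Thus $\mu^{\mathbb{L}^{\otimes k}}(x,\delta) = -m(x,\delta)$, so that $\mu^{\mathbb{L}}(x,\delta) < 0 \iff \lim_{t\to0}\delta(t).\hat x = 0$, while $\mu^{\mathbb{L}}(x,\delta) = 0 \iff \lim_{t\to0}\delta(t).\hat x$ exists in $V \setminus \{0\}$ (and is then $\delta$-fixed).

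\textbf{The core input.} The one nontrivial ingredient is the linear Hilbert--Mumford lemma: $0 \in \overline{H\cdot\hat x}$ iff there is an OPS $\delta$ with $\lim_{t\to0}\delta(t).\hat x = 0$. The direction $(\Leftarrow)$ is immediate. For $(\Rightarrow)$ one works over $\C(\!(t)\!)$: curve selection together with the valuative criterion of properness, applied to the orbit map $H \to \overline{H\hat x}$, yields $g \in H(\C(\!(t)\!))$ with $g.\hat x \in V[[t]]$ specializing to $0$ at $t=0$; the Cartan/Iwahori decomposition $H(\C(\!(t)\!)) = \bigcup_\delta H(\C[[t]])\,\delta(t)\,H(\C[[t]])$ then writes $g = p\,\delta\,q$ with $p, q$ integral, and specializing at $t=0$ shows that the honest OPS $q(0)^{-1}\delta\,q(0)$ of $H$ sends $\hat x$ to $0$. (Alternatively invoke Kempf's existence of an optimal destabilizing OPS.) I expect this to be the main obstacle, as it is the only genuine group-theoretic input; everything else is formal.

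\textbf{Assembling the two cases.} For semistability: $x$ is not semistable $\iff 0 \in \overline{H\hat x} \iff \exists\,\delta$ with $\lim_{t\to0}\delta(t).\hat x = 0 \iff \exists\,\delta$ with $\mu^{\mathbb{L}}(x,\delta) < 0$; negating gives the semistable half. For stability, assume $x$ semistable. If $x$ is not stable, then either $\mathrm{Stab}_H(\hat x)$ is infinite, hence contains a non-constant OPS $\delta$ fixing $\hat x$, for which $\mu^{\mathbb{L}}(x,\delta)$ is the weight of $\delta$ on $\mathbb{L}_x$ and must vanish since $\mu^{\mathbb{L}}(x,\delta), \mu^{\mathbb{L}}(x,\delta^{-1}) \geq 0$; or $H\hat x$ is not closed, whence the lemma gives $\delta$ with $\hat y := \lim_{t\to0}\delta(t).\hat x \in \overline{H\hat x}\setminus H\hat x$, and $\hat y \neq 0$ by semistability forces $m(x,\delta) = 0$, i.e. $\mu^{\mathbb{L}}(x,\delta) = 0$. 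Conversely, a non-constant $\delta$ with $\mu^{\mathbb{L}}(x,\delta) = 0$ produces a nonzero $\delta$-fixed point $\hat y = \lim_{t\to0}\delta(t).\hat x$; if $\hat y \in H\hat x$ then conjugating $\delta$ into $\mathrm{Stab}_H(\hat x)$ exhibits the latter as infinite, and if $\hat y \notin H\hat x$ then $H\hat x$ is not closed, so in either case $x$ is not stable. Hence $x$ is stable $\iff \mu^{\mathbb{L}}(x,\delta) > 0$ for every non-constant $\delta$, which completes the plan.
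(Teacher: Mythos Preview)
The paper does not give its own proof of this proposition: it is stated there as a standard background result, with \cite{GIT} as the underlying reference, and no argument is supplied. Your proposal is the classical proof from \cite{GIT}*{Ch.~2} and is essentially correct, so there is nothing in the paper to compare it against.

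One small point worth tightening: in the stability direction you write that if $\mathrm{Stab}_H(\hat x)$ is infinite it ``hence contains a non-constant OPS.'' A positive-dimensional linear algebraic group need not contain a copy of $\mathbb{G}_m$ (it could be unipotent). The clean fix is to order your case split differently: if $H\hat x$ is not closed, your second case already produces $\delta$ with $\mu^{\mathbb{L}}(x,\delta)=0$; if $H\hat x$ \emph{is} closed but the stabilizer is infinite, then Matsushima's theorem forces $\mathrm{Stab}_H(\hat x)$ to be reductive, hence to contain a nontrivial torus, and your argument goes through. With that adjustment the sketch is complete.
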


We highlight a setting when there is a particularly nice realization of the Mumford number $\mu^{\mathbb{L}}(x, \delta)$, following \cite{KP}*{\S 3}; this construction will be a crucial part of applying this machinery to flag varieties in the later sections. Let $V$ be a finite-dimensional $H$-representation and let $i: X \hookrightarrow \mathbb{P}(V)$ be an $H$-equivariant embedding. Set $\mathbb{L}:= i^\ast(\mathcal{O}(1))$. For an OPS $\delta$, let $\{e_1, \dots, e_n\}$ be an eigenbasis of $V$, so that 
$$
\delta(t) \cdot e_j = t^{\delta_j}e_j
$$
for $j=1, \dots, n$. Then for $x \in X$, we have the following proposition \cite{GIT}*{\S 2.1, Proposition 2.3}.

\begin{proposition} \label{computeMumford}
Write $i(x)=\left[ \sum_{j=1}^n x_j e_j \right] \in \mathbb{P}(V)$. Then for $\mathbb{L} = i^\ast(\mathcal{O}(1))$ as above, 
$$
\mu^{\mathbb{L}}(x, \delta) = \max_{j: x_j \neq 0} (-\delta_j).
$$
\end{proposition}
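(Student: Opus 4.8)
The plan is to verify Proposition \ref{computeMumford} directly from the definition of the Mumford number $\mu^{\mathbb{L}}(x,\delta)$ given just above it, tracing through the induced $\C^\ast$-action on the fiber of $\mathcal{O}(1)$ at the limit point. First I would fix the eigenbasis $\{e_1,\dots,e_n\}$ with $\delta(t)\cdot e_j = t^{\delta_j}e_j$, write $i(x) = [\,v\,]$ with $v = \sum_j x_j e_j$, and let $S = \{j : x_j \neq 0\}$ be the support. Set $m := \min_{j \in S} \delta_j$. The key computation is to identify the limit $x_0 = \lim_{t\to 0}\delta(t).i(x)$ in $\P(V)$: since $\delta(t)\cdot v = \sum_{j\in S} t^{\delta_j} x_j e_j$, factoring out $t^m$ gives $[\,\delta(t)\cdot v\,] = \big[\sum_{j \in S} t^{\delta_j - m} x_j e_j\big]$, and as $t \to 0$ every term with $\delta_j > m$ vanishes, so $x_0 = \big[\sum_{j \in S,\ \delta_j = m} x_j e_j\big]$. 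In particular $x_0$ is a $\delta$-fixed point lying in the eigenspace of weight $m$.

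Next I would compute the weight of the $\C^\ast$-action on the fiber $\mathcal{O}(1)_{x_0}$. The standard convention identifies the fiber of $\mathcal{O}(-1)$ over a point $[w] \in \P(V)$ with the line $\C w \subseteq V$, so $\C^\ast$ acts on $\mathcal{O}(-1)_{x_0}$ with weight $m$ (the weight by which $\delta$ scales any vector representing $x_0$); dualizing, $\C^\ast$ acts on $\mathcal{O}(1)_{x_0}$ with weight $-m$. Since $\mathbb{L} = i^\ast(\mathcal{O}(1))$ and $i$ is $H$-equivariant, the fiber $\mathbb{L}_{x_0}$ carries the same weight, so $\mu^{\mathbb{L}}(x,\delta) = -m = -\min_{j \in S}\delta_j = \max_{j: x_j \neq 0}(-\delta_j)$, as claimed. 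I would also note that this agrees with the cited reference \cite{GIT}*{\S 2.1, Proposition 2.3}, modulo the sign convention fixed in the paragraph defining $\mu^{\mathbb{L}}$.

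The main obstacle here is bookkeeping of sign conventions rather than any real difficulty: one must be careful that (i) the Mumford number is defined via the action on $\mathbb{L}_{x_0}$ at the \emph{limit} point $x_0$, not at $x$ itself; (ii) the tautological bundle $\mathcal{O}(-1)$ versus $\mathcal{O}(1)$ introduces a dualization, hence a sign flip; and (iii) the limit picks out the \emph{minimal} weight among the support, which after the sign flip becomes the \emph{maximal} value of $-\delta_j$. A secondary point worth a sentence is that $i(x) \in X \subseteq \P(V)$ and $X$ is closed, so the limit $x_0$ indeed lies in $X$ and the restriction to $\mathbb{L}$ is legitimate; this is where $H$-equivariance of $i$ and properness of $X$ (stated in the setup) are used.
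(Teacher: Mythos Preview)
The paper does not supply its own proof of Proposition~\ref{computeMumford}; it simply quotes the result from \cite{GIT}*{\S 2.1, Proposition 2.3}. Your argument is the standard direct verification of that classical fact, and it is correct: you identify the limit point $x_0$ by extracting the minimal weight $m=\min_{j\in S}\delta_j$ from the support, observe that the tautological fiber $\mathcal{O}(-1)_{x_0}$ carries $\C^\ast$-weight $m$, dualize to get weight $-m$ on $\mathcal{O}(1)_{x_0}=\mathbb{L}_{x_0}$, and match this against the paper's convention $\mu^{\mathbb{L}}(x,\delta)=r$ for $\delta(t).z=t^r z$. Your closing remarks on the sign bookkeeping and on why $x_0\in X$ are accurate and appropriately cautious; nothing is missing.
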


\subsection{Descent and pushforward of line bundles to GIT quotients}

As in Theorem \ref{GITquotient}, let $\pi$ be the quotient map $X^{ss}(\mathbb{L}) \to X//_{\mathbb{L}} H$. For $\mathcal{F}$ a quasi-coherent $H$-equivariant sheaf over $X^{ss}(\mathbb{L})$, let $\pi^H_\ast \mathcal{F}$ be the invariant direct image sheaf on $X//_{\mathbb{L}} H$, where the sections over an open set $U$ are the $H$-invariant sections in $\mathcal{F}(\pi^{-1}(U))$. Then $\pi^H_\ast$ is exact, and $(\pi^\ast, \pi^H_\ast)$ form an adjoint pair with $\pi^H_\ast \circ \pi^\ast = \mathrm{Id}$. We will be most interested in the case when $\mathcal{F}$ is a line bundle. 

\begin{definition} \label{descdef} Recall that a line bundle $\mathbb{L}'$ on $X$ \emph{descends} to $X//_{\mathbb{L}} H$ if there is a line bundle $\mathbb{M}'$ on $X//_{\mathbb{L}} H$ such that 
$$
\mathbb{L}'|_{X^{ss}(\mathbb{L})} = \pi^{\ast}(\mathbb{M}').
$$
\end{definition}

For example, in Theorem \ref{GITquotient}(2) there exists some $m \geq 0$ such that $\mathbb{L}^{\otimes m}$ descends to $X//_{\mathbb{L}} H$, and in some cases $m>1$ even though $\mathbb{L}$ was used to define the quotient. Via Kempf's \emph{descent lemma}, one can determine when a line bundle (or more generally, a vector bundle) descends to the GIT quotient; while we will later rely on descent conditions specific to the flag variety which follow from Kempf's descent lemma, we will not need the full result here, and instead refer to \cite{DN}*{Theorem 2.3} for details. 


By definition of the invariant direct image, if $\mathbb{L}$ descends to a line bundle $\mathbb{M}'$ on the GIT quotient, then $\pi^H_\ast(\mathbb{L})=\mathbb{M}'$. If $\mathbb{L}$ does \emph{not} descend, then the direct image $\pi^H_\ast(\mathbb{L})$ (if nonzero) is nonetheless a rank one reflexive sheaf on the GIT quotient, and in either case, the following result of Teleman \cite{Tel}*{Theorem 3.2.a and Remark 3.3(i)} allows us to compare the cohomology of these two sheaves. 

\begin{proposition} \label{cohomcompare}
$\mathrm{H}^\ast(X, \mathbb{L})^H = \mathrm{H}^\ast(X //_{\mathbb{L}} H, \pi^H_\ast(\mathbb{L})).$
\end{proposition}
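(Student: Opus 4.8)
The statement to establish is Proposition~\ref{cohomcompare}, the cohomological comparison $\mathrm{H}^\ast(X, \mathbb{L})^H = \mathrm{H}^\ast(X //_{\mathbb{L}} H, \pi^H_\ast(\mathbb{L}))$. The plan is to reduce this to the exactness of the invariant direct image functor $\pi^H_\ast$ together with the acyclicity of $\pi^H_\ast$ on injectives. First I would fix notation: write $Y = X //_{\mathbb{L}} H$ and $U = X^{ss}(\mathbb{L})$, so that $\pi: U \to Y$ is the good categorical quotient. The cohomology $\mathrm{H}^\ast(X, \mathbb{L})^H$ should first be identified with $\mathrm{H}^\ast(U, \mathbb{L}|_U)^H$: since $X$ is projective and $\mathbb{L}$ is ample, the unstable locus $X \setminus U$ has codimension large enough (or one argues directly via the graded ring of invariant sections) that restriction induces an isomorphism on invariant cohomology — this is standard and is exactly the content referenced in \cite{Tel}. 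In fact, the cleanest route is to take Teleman's theorem \cite{Tel}*{Theorem 3.2.a and Remark 3.3(i)} as a black box, as the excerpt permits; but I would still want to record why the two sides are even comparable.

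The key structural input is that $\pi^H_\ast$ is exact (stated in the excerpt) and sends $H$-equivariant injective (or flasque/flabby) sheaves on $U$ to $\pi_\ast$-acyclic sheaves on $Y$. Granting this, I would take an $H$-equivariant injective resolution $\mathbb{L}|_U \to \mathcal{I}^\bullet$ of the equivariant sheaf $\mathbb{L}|_U$ on $U$. Applying $\pi^H_\ast$ gives a complex $\pi^H_\ast \mathcal{I}^\bullet$ on $Y$ which, by exactness of $\pi^H_\ast$, is a resolution of $\pi^H_\ast(\mathbb{L}|_U) = \pi^H_\ast(\mathbb{L})$; and by the acyclicity statement its terms compute $\mathrm{H}^\ast(Y, \pi^H_\ast(\mathbb{L}))$. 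On the other hand, taking global sections of $\mathcal{I}^\bullet$ over $U$ and then $H$-invariants computes $\mathrm{H}^\ast(U, \mathbb{L}|_U)^H$ (invariants is exact on $H$-equivariant sheaves of $\mathbb{C}$-vector spaces in the relevant setting, $H$ being reductive, so invariant cohomology is the cohomology of the invariant complex). Since $\Gamma(Y, \pi^H_\ast \mathcal{I}^j) = \Gamma(U, \mathcal{I}^j)^H$ by the very definition of the invariant direct image, the two complexes of global sections agree term by term, hence have the same cohomology, giving the claimed identification.

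The main obstacle, and the step I would be most careful about, is the acyclicity claim: that $\pi^H_\ast$ carries an injective resolution to a $\Gamma(Y,-)$-acyclic resolution. This is where reductivity of $H$ and the good-quotient property of $\pi$ enter essentially — one needs that for an $H$-equivariant injective (equivalently, a sufficiently flabby) sheaf $\mathcal{I}$ on $U$, the sheaf $\pi^H_\ast \mathcal{I}$ has vanishing higher cohomology on $Y$. The honest way to see this is to combine: (i) $\pi_\ast$ is acyclic on flasque sheaves and $\pi^H_\ast \mathcal{I}$ is a direct summand (via the Reynolds operator for the reductive group $H$) of $\pi_\ast \mathcal{I}$, which is flasque; hence $\pi^H_\ast \mathcal{I}$ is flasque, hence $\Gamma(Y,-)$-acyclic. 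This is precisely the argument packaged in Teleman's paper, so in the write-up I would simply cite \cite{Tel}*{Theorem 3.2.a and Remark 3.3(i)} for the comparison and note that exactness of $\pi^H_\ast$ plus the Reynolds-operator splitting $\pi_\ast = \pi^H_\ast \oplus (\text{rest})$ reduces everything to the classical acyclicity of pushforward along the good quotient; the remaining verifications are routine and I would not grind through them.
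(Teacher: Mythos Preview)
The paper offers no proof of Proposition~\ref{cohomcompare} at all: it simply records the statement as ``the following result of Teleman'' and cites \cite{Tel}*{Theorem~3.2.a and Remark~3.3(i)}. So your write-up already goes further than the paper does, and the part of your sketch comparing cohomology on $U=X^{ss}(\mathbb{L})$ with cohomology on $Y=X//_{\mathbb{L}}H$ --- exactness of $\pi^H_\ast$, the Reynolds splitting $\pi_\ast=\pi^H_\ast\oplus(\text{rest})$, and flasqueness of the pushed-forward injectives --- is correct and is indeed the elementary half of the story.

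The gap is in your first reduction. You assert that $\mathrm{H}^\ast(X,\mathbb{L})^H\cong\mathrm{H}^\ast(U,\mathbb{L}|_U)^H$ because ``the unstable locus $X\setminus U$ has codimension large enough,'' and call this standard. It is not: the unstable locus can perfectly well be a divisor (already for $T$ acting on $G/P$ this happens), so no naive excision or Hartogs-type argument applies in higher cohomology. The identification $\mathrm{H}^0(X,\mathbb{L})^H=\mathrm{H}^0(U,\mathbb{L}|_U)^H$ is easy (invariant sections vanish on unstable points, and one uses normality of $X$ to extend), but for $\mathrm{H}^{>0}$ this comparison is precisely the substance of Teleman's quantization theorem, proved via the Hesselink--Kirwan stratification of the unstable locus and a vanishing theorem on each stratum. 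You do ultimately cite \cite{Tel} at this step, which saves the argument, but the codimension remark is misleading and should be dropped; what remains is then just the paper's citation together with your (correct, and genuinely additional) unpacking of the $U$-to-$Y$ step.
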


\section{Quotients of flag varieties by maximal torus and stretched Kostka quasi-polynomials} \label{FlagGIT}

We now apply the machinery developed in the previous section to the case when $X=G/P$ is a flag variety and $H=T$ is a maximal torus; the role of this section is to assemble the proof of Theorem \ref{MainTheorem1}. To this end, we fix a simple, simply-connected complex algebraic group $G$ and a pair of primitive dominant integral weights $\mu \leq \lambda$, where we recall that by primitive we mean
$$
\lambda-\mu=\sum_i c_i \alpha_i
$$
as a sum of the simple roots of $G$ with each $c_i>0$. 

Write $\lambda=\sum_i d_i \varpi_i$ in the basis of fundamental weights, and as in Theorem \ref{MainTheorem1} set $\Phi'$ to be the sub-root system determined by $\Phi'=\mathrm{span}\{\alpha_i: d_i=0\} \subset \Phi$. Associated to this sub-root system is a standard parabolic subgroup $B \subset P_\lambda \subset G$ whose Levi component has root system $\Phi'$. We will abuse notation and write $P=P_\lambda$, as $\lambda$ will be fixed. Then $\lambda$ extends as a character of $P$ with trivial action on $U_P$. 

\subsection{Line bundles on $G/P$ and the dimension of a GIT quotient} We consider the $G$-equivariant line bundle $L_\lambda:= G \times_P \C_{-\lambda}$ on the flag variety $G/P$ associated to the principal $P$-bundle $G \to G/P$, where the choice of sign is for convenience. Then in particular $L_\lambda$ is an ample line bundle on $G/P$. Alternatively, via the embedding 
$$
i_\lambda: G/P \hookrightarrow \mathbb{P}(V_\lambda), \ gP \mapsto [gv_\lambda],
$$
where $v_\lambda \in V_\lambda$ is a nonzero highest-weight vector, we have that $L_\lambda \cong i_\lambda^\ast \mathcal{O}(1)$. By the classical Borel--Weil theorem, as $G$-representations  
$$
\mathrm{H}^0(G/P, L_\lambda) \cong V_\lambda^\ast,
$$
where $V_\lambda^\ast$ is the dual representation to $V_\lambda$, and all higher cohomology vanishes. 

We now modify the line bundle $L_\lambda$ to better suit our purposes; this we do by a change in its $T$-linearization. Specifically, set 
\begin{equation} \label{linebundef}
\mathbb{L}=\mathbb{L}_{\lambda, \mu}:= L_\lambda \otimes \C_\mu,
\end{equation}
where we again abuse notation and drop the dependence of $\mathbb{L}$ on $\lambda$ and $\mu$. The total space of this line bundle is still given by $G \times_P \C_{-\lambda}$, but the $T$-action is now given by 
$$
t.[g,z]:=[tg, \mu(t)z].
$$
Now applying the Borel--Weil theorem, as $T$-representations we get the identifications 
$$
\mathrm{H}^0(G/P, \mathbb{L}) \cong \C_\mu \otimes V_\lambda^\ast \cong (\C_{-\mu} \otimes V_\lambda)^\ast.
$$
In particular, by considering $T$-invariants we have that $\mathrm{H}^0(G/P, \mathbb{L})^T \neq 0 \iff V_\lambda(\mu) \neq 0$. With this perspective, we can now consider the nonvanishing of weight spaces in $V_\lambda$ via the language of GIT. More to the point, we have

\begin{proposition} \label{nonemptystable} 
Let $G/P$ and $\mathbb{L}$ be as above. Then $(G/P)^{s}(\mathbb{L}) \neq \varnothing$.
\end{proposition}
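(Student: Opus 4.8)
The plan is to exhibit stable points directly via the Hilbert--Mumford criterion (Proposition~\ref{HilbMum}) together with the explicit Mumford-number formula of Proposition~\ref{computeMumford} for the embedding $i_\lambda\colon G/P\hookrightarrow\mathbb{P}(V_\lambda)$. Since $L_\lambda\cong i_\lambda^\ast\mathcal{O}(1)$ and $\mathbb{L}=L_\lambda\otimes\C_\mu$ differs from $L_\lambda$ only in its $T$-linearization, the Mumford numbers are related by
$$
\mu^{\mathbb{L}}(x,\delta)=\mu^{L_\lambda}(x,\delta)+\langle\mu,\delta\rangle
$$
for every one-parameter subgroup $\delta$ of $T$, where $\langle\cdot,\cdot\rangle$ denotes the pairing between characters and cocharacters. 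Choosing a weight basis of $V_\lambda$ and applying Proposition~\ref{computeMumford}, for $x=gP$ one obtains
$$
\mu^{\mathbb{L}}(gP,\delta)=\langle\mu,\delta\rangle-\min\bigl\{\langle\nu,\delta\rangle:\ \nu\in\mathrm{wt}(V_\lambda),\ (gv_\lambda)_\nu\neq 0\bigr\},
$$
where $\mathrm{wt}(V_\lambda)$ is the set of weights of $V_\lambda$ and $(gv_\lambda)_\nu$ the component of $gv_\lambda$ in the $\nu$-weight space.

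First I would observe that for $g$ in a dense open subset $U\subseteq G$ every weight of $V_\lambda$ has $(gv_\lambda)_\nu\neq 0$: since $V_\lambda$ is irreducible, $v_\lambda$ generates it as a $G$-module, so the projection of $Gv_\lambda$ onto any one weight space is nonzero, whence each $\{g:(gv_\lambda)_\nu=0\}$ is a proper closed subset of $G$; intersect the complements over the finitely many weights. For such $g$ the formula becomes
$$
\mu^{\mathbb{L}}(gP,\delta)=\langle\mu,\delta\rangle-\min_{v\in\mathcal{P}_\lambda}\langle v,\delta\rangle,
$$
where $\mathcal{P}_\lambda=\mathrm{conv}(W\lambda)$ is the weight polytope. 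By Proposition~\ref{HilbMum}, the image of $U$ in $G/P$ consists of stable points provided $\langle\mu,\delta\rangle>\min_{v\in\mathcal{P}_\lambda}\langle v,\delta\rangle$ for every nonzero cocharacter $\delta$, and this holds for all such $\delta$ --- indeed for all nonzero $\delta\in X_\ast(T)_{\R}$, by an elementary separation argument --- precisely when $\mu$ lies in the interior of $\mathcal{P}_\lambda$.

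It remains to show that primitivity of $(\lambda,\mu)$ forces $\mu\in\mathrm{int}(\mathcal{P}_\lambda)$, which is the crux. Since $\mu$ is dominant with $\mu\leq\lambda$ it is a weight of $V_\lambda$, so $\mu\in\mathcal{P}_\lambda$. Suppose $\mu\in\partial\mathcal{P}_\lambda$; fixing a $W$-invariant inner product and identifying characters with cocharacters, choose $\delta\neq 0$ with $\langle\mu,\delta\rangle=\max_{v\in\mathcal{P}_\lambda}\langle v,\delta\rangle=\max_{w\in W}\langle\lambda,w\delta\rangle=\langle\lambda,\delta^+\rangle$, the last equality by dominance of $\lambda$, where $\delta^+$ is the dominant representative of $W\delta$. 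Dominance of $\mu$ gives $\langle\mu,\delta\rangle\leq\langle\mu,\delta^+\rangle$, and since $\lambda-\mu=\sum_i c_i\alpha_i$ with all $c_i>0$ and $\delta^+$ is dominant, $\langle\lambda-\mu,\delta^+\rangle=\sum_i c_i\langle\alpha_i,\delta^+\rangle\geq 0$. Hence
$$
\langle\lambda,\delta^+\rangle=\langle\mu,\delta\rangle\leq\langle\mu,\delta^+\rangle\leq\langle\lambda,\delta^+\rangle,
$$
forcing $\langle\lambda-\mu,\delta^+\rangle=0$; as every term $c_i\langle\alpha_i,\delta^+\rangle$ is nonnegative with $c_i>0$, we get $\langle\alpha_i,\delta^+\rangle=0$ for all $i$, so $\delta^+=0$ (the simple roots span), contradicting $\delta\neq 0$. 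Therefore $\mu\in\mathrm{int}(\mathcal{P}_\lambda)$, every point of the image of $U$ is stable, and $(G/P)^s(\mathbb{L})\neq\varnothing$ --- in fact dense.

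The main obstacle is exactly the convex-geometric content: recognizing that stability of a generic point is equivalent to $\mu\in\mathrm{int}(\mathcal{P}_\lambda)$, and then squeezing the three pairings $\langle\lambda,\delta^+\rangle$, $\langle\mu,\delta\rangle$, $\langle\mu,\delta^+\rangle$ to extract that condition from primitivity. The remaining ingredients --- identifying $\mathbb{L}$ with a twist of $i_\lambda^\ast\mathcal{O}(1)$ with the correct sign, genericity of the full weight support, and the passage between the lattice and real forms of the Hilbert--Mumford condition --- are routine, though the sign conventions in the twist and in Proposition~\ref{computeMumford} deserve care.
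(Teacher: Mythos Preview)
Your proof is correct and follows essentially the same approach as the paper: both exhibit the dense open set of points $gP$ where every weight component of $gv_\lambda$ is nonzero, then verify the Hilbert--Mumford criterion by replacing the one-parameter subgroup with its dominant Weyl conjugate and invoking primitivity to force strict positivity. The only difference is packaging---you phrase the computation as ``$\mu$ lies in the interior of the weight polytope $\mathcal{P}_\lambda$'' and argue by contradiction via a supporting hyperplane, whereas the paper writes the Mumford number directly as $\langle\lambda-\mu,\dot\delta_+\rangle+\langle\mu-w\mu,\dot\delta_+\rangle$ and observes each summand is nonnegative with the first strictly positive.
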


\begin{proof}
Fix a weight basis $\{v_i\}$ of the representation $V_\lambda$, and the corresponding dual basis $\{v_i^*\}$. Define subsets $X_i \subseteq G/P$ by 
$$
X_i:=\{gP: v_i^*(gv_\lambda)\ne 0\}.
$$
As $V_\lambda$ is irreducible, each $X_i$ is open and nonempty. Set $\mathring{X}:= \bigcap_i X_i$; this is also open and nonempty, since $G/P$ is irreducible. We claim that any $\mathring{g}P \in \mathring{X}$ is stable with respect to $\mathbb{L}$. 

Indeed, by Proposition \ref{computeMumford}, for any non-constant OPS $\delta: \mathbb{C}^\ast \to T$ we have 
$$
\mu^{\mathbb{L}}(\mathring{g}P, \delta) = \max_{\gamma \in \op{wt}(V_\lambda)} \{\langle -\gamma, \dot{\delta} \rangle\} +\langle \mu, \dot{\delta}\rangle,
$$
where $\op{wt}(V_\lambda)$ is the set of weights of $V_\lambda$ and $\langle \cdot, \cdot \rangle$ is the unique (up to scalar) non-degenerate $W$-invariant pairing between the weight and coweight lattice. Let $w \in W$ be such that $\dot{\delta}_+:= -w\dot{\delta}$ is dominant (by which we mean $\langle \alpha_i, \dot{\delta}_+ \rangle \geq 0$ for all simple roots $\alpha_i$). Since the set of weights $\op{wt}(V_\lambda)$ is $W$-stable and the pairing $\langle \cdot, \cdot \rangle$ is $W$-invariant, we get that 
$$
\begin{aligned}
\mu^\mathbb{L}(\mathring{g}P, \delta) &= \max_{\gamma \in \op{wt}(V_\lambda)} \{ \langle \gamma, \dot{\delta}_+ \rangle\} + \langle -w\mu, \dot{\delta}_+ \rangle \\
&= \langle \lambda, \dot{\delta}_+ \rangle - \langle w\mu, \dot{\delta}_+ \rangle \\ 
&= \langle \lambda-\mu, \dot{\delta}_+ \rangle + \langle \mu-w\mu, \dot{\delta}_+ \rangle
\end{aligned}
$$
Now, since $(\lambda, \mu)$ was assumed to be a primitive pair, necessarily $\langle \lambda-\mu, \dot{\delta}_+ \rangle >0$. And, since $\mu$ is dominant, we have that $\mu \geq w\mu$, so that $\langle \mu-w\mu, \dot{\delta}_+ \rangle \geq 0$. In all, this gives that $\mu^\mathbb{L}(\mathring{g}P, \delta) >0$, so by the Hilbert--Mumford criterion of Proposition \ref{HilbMum}, $\mathring{g}P$ is stable with respect to $\mathbb{L}$.  
\end{proof}

By Corollary \ref{GITdim}, we get the following immediate crucial corollary. 

\begin{corollary} \label{dimensioncount}
For $G/P$ and $\mathbb{L}$ as above, 
$$
\dim \left( (G/P) //_{\mathbb{L}} T \right) = \dim \left(G/P \right) -\dim \left( T \right) =  \frac{1}{2}|\Phi|-\frac{1}{2}|\Phi'| - \mathrm{rank}(\Phi),
$$
where as before $\Phi'$ is the root system for the Levi component of $P$. 
\end{corollary}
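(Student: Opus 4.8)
The plan is to combine the non-emptiness of the stable locus established in Proposition \ref{nonemptystable} with the general dimension formula of Corollary \ref{GITdim}, and then carry out an elementary root-system bookkeeping computation for $\dim(G/P)$ and $\dim(T)$. Since Proposition \ref{nonemptystable} gives $(G/P)^{s}(\mathbb{L}) \neq \varnothing$, Corollary \ref{GITdim} applies directly with $X = G/P$ and $H = T$, yielding
$$
\dim\left( (G/P) //_{\mathbb{L}} T \right) = \dim(G/P) - \dim(T),
$$
so it remains only to express the right-hand side in terms of $\Phi$ and $\Phi'$.

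The second step is the dimension count. We have $\dim(T) = \mathrm{rank}(\Phi)$, since $G$ is simple of rank equal to $\mathrm{rank}(\Phi)$. For the flag variety, $\dim(G/P)$ equals the number of roots in $\Phi^+ \setminus (\Phi')^+$, i.e.\ the positive roots not lying in the root system of the Levi factor of $P = P_\lambda$; equivalently $\dim(G/P) = \dim(G) - \dim(P)$ with $\dim(G) = |\Phi| + \mathrm{rank}(\Phi)$ and $\dim(P) = \tfrac{1}{2}|\Phi| + \tfrac{1}{2}|\Phi'| + \mathrm{rank}(\Phi)$, the latter because the Levi contributes $|\Phi'| + \mathrm{rank}(\Phi)$ and the unipotent radical $U_P$ contributes $\tfrac{1}{2}(|\Phi| - |\Phi'|)$. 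Either computation gives $\dim(G/P) = \tfrac{1}{2}|\Phi| - \tfrac{1}{2}|\Phi'|$, and substituting into the display above produces the asserted formula
$$
\dim\left( (G/P) //_{\mathbb{L}} T \right) = \tfrac{1}{2}|\Phi| - \tfrac{1}{2}|\Phi'| - \mathrm{rank}(\Phi).
$$

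There is no genuine obstacle at this stage: all of the substance has already gone into Proposition \ref{nonemptystable}, whose proof crucially used the primitivity hypothesis on $(\lambda, \mu)$ to force $\langle \lambda - \mu, \dot{\delta}_+ \rangle > 0$ and hence strict positivity of every Mumford number. The only points requiring care are the standard identifications: that $\Phi'$ really is the root system of the Levi component of the standard parabolic $P_\lambda$ (whose Levi simple roots are exactly the $\alpha_i$ with $d_i = 0$), so that $\lambda$ extends to a character of $P_\lambda$ and $L_\lambda$, hence $\mathbb{L}$, is ample as used throughout — and the bookkeeping identity $|U_P| = \tfrac{1}{2}(|\Phi| - |\Phi'|)$ for the dimension of the unipotent radical. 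These are routine, so the corollary follows immediately.
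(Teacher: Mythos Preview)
Your proof is correct and follows exactly the approach the paper intends: the paper presents this corollary as immediate from Proposition \ref{nonemptystable} together with Corollary \ref{GITdim}, and you have simply supplied the routine root-system dimension count that the paper leaves implicit. There is nothing to add.
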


Thus, the desired degree of the quasi-polynomial $K_{\lambda, \mu}(N)$ for a primitive pair $(\lambda, \mu)$ has a geometric realization as the dimension of an associated GIT quotient. We now make this connection precise.

\subsection{Asymptotic Riemann-Roch and the degree of $K_{\lambda, \mu}(N)$}

Let $X$ be a projective variety and $\mathcal{F}$ a coherent sheaf on $X$. We denote the Euler--Poincar\'e characteristic of $\mathcal{F}$ by 
$$
\chi(\mathcal{F}):= \sum_{i \geq 0} (-1)^i \dim \mathrm{H}^i(X, \mathcal{F}).
$$

Now, consider the additional data of an ample line bundle $\mathbb{L}$ on $X$. The following proposition, often called the \emph{asymptotic Riemann--Roch formula}, is also a particular case of a theorem of Snapper \cite{Kleim}*{\S1 Theorem} and is closely connected with the notion of a Hilbert polynomial for a sheaf. This result is the key connection between the degree of $K_{\lambda, \mu}(N)$ and the dimension of the GIT quotient. We give a version of the statement which is most closely aligned to our purposes; the conditions can be somewhat relaxed. 

\begin{proposition} \label{aRR}
For $X$ a projective variety, $\mathcal{F}$ a coherent sheaf on $X$ and $\mathbb{L}$ an ample line bundle on $X$, the function 
$$
n \mapsto \chi(\mathcal{F} \otimes \mathbb{L}^{\otimes n})
$$
is a polynomial in $n$ of degree $\dim \op{Supp}(\mathcal{F})$. 
\end{proposition}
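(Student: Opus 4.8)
The plan is to prove Proposition \ref{aRR} by reducing it, via induction on $\dim\op{Supp}(\mathcal{F})$, to the classical statement that the Hilbert polynomial of a coherent sheaf has degree equal to the dimension of its support. First I would observe that, since $\mathbb{L}$ is ample, after replacing $\mathbb{L}$ by a suitable power (which only rescales the variable $n$ and does not change the degree of the resulting polynomial) we may assume $\mathbb{L}$ is very ample, hence arises from a projective embedding $X\hookrightarrow \P^m$ with $\mathbb{L}=\mathcal{O}_X(1)$. In that situation $\chi(\mathcal{F}\otimes\mathbb{L}^{\otimes n})$ is by definition the Hilbert polynomial $P_{\mathcal{F}}(n)$ of $\mathcal{F}$ with respect to this embedding, and the claim is the standard fact (e.g. Hartshorne III.5.2, or Snapper's theorem as cited) that $\deg P_{\mathcal{F}} = \dim\op{Supp}(\mathcal{F})$.

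If one prefers to give the argument rather than cite it wholesale, the inductive skeleton runs as follows. For the base case $\dim\op{Supp}(\mathcal{F})=0$, the sheaf $\mathcal{F}$ is supported on finitely many points, so $\mathcal{F}\otimes\mathbb{L}^{\otimes n}\cong\mathcal{F}$ for all $n$, all higher cohomology vanishes, and $\chi(\mathcal{F}\otimes\mathbb{L}^{\otimes n})=\dim\mathrm{H}^0(X,\mathcal{F})$ is a nonzero constant, a polynomial of degree $0$. For the inductive step, choose a hyperplane section $H$ (a general member of $|\mathbb{L}|$) such that the multiplication map gives a short exact sequence
\[
0 \to \mathcal{F}\otimes\mathbb{L}^{\otimes(n-1)} \to \mathcal{F}\otimes\mathbb{L}^{\otimes n} \to (\mathcal{F}\otimes\mathbb{L}^{\otimes n})|_H \to 0,
\]
up to the kernel and cokernel supported in lower dimension; additivity of $\chi$ in short exact sequences then yields
\[
\chi(\mathcal{F}\otimes\mathbb{L}^{\otimes n}) - \chi(\mathcal{F}\otimes\mathbb{L}^{\otimes(n-1)}) = \chi\big((\mathcal{F}\otimes\mathbb{L}^{\otimes n})|_H\big) + (\text{lower-order terms}),
\]
and since $\op{Supp}(\mathcal{F}|_H)$ has dimension one less than $\op{Supp}(\mathcal{F})$, the right-hand side is a polynomial of degree $\dim\op{Supp}(\mathcal{F})-1$ by the inductive hypothesis. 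A function whose successive differences form a polynomial of degree $e-1$ (with the correct sign on the leading term, which comes from positivity of the leading coefficient of a Hilbert polynomial) is itself a polynomial of degree $e$; this finishes the induction.

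The main obstacle — and the reason it is cleanest simply to invoke Snapper's theorem as the text already does — is making the hyperplane-section step rigorous when $\mathcal{F}$ is an arbitrary coherent sheaf: one must control the associated points of $\mathcal{F}$ to guarantee a hyperplane $H$ that does not contain any component of $\op{Supp}(\mathcal{F})$ and along which the multiplication-by-section map is injective outside a lower-dimensional locus, and one must track that the various error terms really do have strictly smaller-dimensional support so that they contribute only to lower-degree terms. For the purposes of this paper none of this needs to be reproduced: we will apply the proposition only with $\mathbb{L}$ ample and $\mathcal{F}$ either a line bundle on $G/P$ or the reflexive pushforward $\pi^T_\ast(\mathbb{L})$ on the GIT quotient, where $\op{Supp}(\mathcal{F})$ is the whole (irreducible) variety, so I would state Proposition \ref{aRR} with a reference to \cite{Kleim} and move on.
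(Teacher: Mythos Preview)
The paper does not give its own proof of this proposition; it simply states it and attributes it to Snapper's theorem via the citation to \cite{Kleim}. Your ultimate recommendation---state the result with a reference to \cite{Kleim} and move on---is exactly what the paper does, so your proposal is aligned (one small caution on your sketch: passing to a power of $\mathbb{L}$ to make it very ample only shows $n\mapsto\chi(\mathcal{F}\otimes\mathbb{L}^{\otimes n})$ is quasi-polynomial along each residue class, not polynomial outright, so if you ever do spell the argument out you should either invoke Snapper's result directly for arbitrary line bundles or add the extra step that identifies the constituent polynomials).
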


We apply this to the case of $X=(G/P)//_{\mathbb{L}} T$. As before, set $\mathbb{L} = L_\lambda \otimes \C_\mu$, where as always $(\lambda, \mu)$ is a primitive pair. As in the discussion after Definition \ref{descdef}, it need not be the case that $\mathbb{L}$ descends to a line bundle on $(G/P)//_{\mathbb{L}} T$. Nevertheless, some positive power does descend. Fix $k \geq 1$ such that $\mathbb{L}^{\otimes k}$ descends to the GIT quotient (for now, any such a choice suffices; we will return to this discussion in the next section). Denote by $\mathbb{M}$ the corresponding line bundle on the GIT quotient; note that $\mathbb{M}=\pi_\ast^T \left(\mathbb{L}^{\otimes k}\right)$ is ample.

Fixing $0\leq j <k$, we set $\mathcal{F}_j$ to be the coherent sheaf on $(G/P)//_\mathbb{L} T$ given by 
$$
\mathcal{F}_j:= \pi_\ast^T(\mathbb{L}^{\otimes j}),
$$
where $\pi_\ast^T$ is the invariant direct image. Recall that this is a reflexive sheaf of rank one. The following proposition is now a direct consequence of Proposition \ref{aRR}. 

\begin{proposition} \label{Kquasipiece}

For $G$ simple, simply-connected, $(\lambda, \mu)$ a primitive pair, and for a choice of $k \geq 1$ and $0 \leq j < k$ as above, the function $q_j: \mathbb{N} \to \mathbb{N}$ given by 
$$
q_j: n \mapsto \dim V_{(j+kn)\lambda}((j+kn)\mu)
$$
is a polynomial in $n$ of degree $\dim ((G/P)//_{\mathbb{L}} T)$. 
\end{proposition}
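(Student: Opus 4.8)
The plan is to compute $q_j(n)$ as a $T$-equivariant Euler characteristic on $G/P$, transport it to the GIT quotient $(G/P)//_{\mathbb{L}} T$, and there invoke asymptotic Riemann--Roch. First I would identify the source quantity. Since $P = P_\lambda = P_{m\lambda}$ for every $m \geq 1$, the Borel--Weil theorem gives $\mathrm{H}^0(G/P, \mathbb{L}^{\otimes m}) \cong \C_{m\mu} \otimes V_{m\lambda}^\ast$ with all higher cohomology vanishing; as taking $T$-invariants is exact, this yields $\chi(G/P, \mathbb{L}^{\otimes m})^T = \dim(\C_{m\mu}\otimes V_{m\lambda}^\ast)^T = \dim V_{m\lambda}(m\mu)$, exactly as in the discussion preceding Proposition \ref{nonemptystable}. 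Taking $m = j + kn$ (the case $m=0$ being trivial) we obtain $q_j(n) = \chi(G/P, \mathbb{L}^{\otimes(j+kn)})^T$.

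Next I would move this Euler characteristic onto the quotient. Because $X^{ss}(\mathbb{L}^{\otimes m}) = X^{ss}(\mathbb{L})$ and $(G/P)//_{\mathbb{L}^{\otimes m}}T = (G/P)//_\mathbb{L} T$ for $m \geq 1$, Teleman's comparison (Proposition \ref{cohomcompare}), applied to the equivariant line bundle $\mathbb{L}^{\otimes(j+kn)}$, gives $\mathrm{H}^\ast(G/P, \mathbb{L}^{\otimes(j+kn)})^T = \mathrm{H}^\ast\big((G/P)//_\mathbb{L}T,\ \pi_\ast^T(\mathbb{L}^{\otimes(j+kn)})\big)$, and in particular the Euler characteristics agree. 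Since $\mathbb{L}^{\otimes k}$ descends to $\mathbb{M}$, on the semistable locus $\mathbb{L}^{\otimes(j+kn)} = \mathbb{L}^{\otimes j} \otimes \pi^\ast(\mathbb{M}^{\otimes n})$, so the projection formula for the exact invariant direct image $\pi_\ast^T$ — which is local on the base and holds for the line bundle $\mathbb{M}^{\otimes n}$, using $\pi_\ast^T \circ \pi^\ast = \mathrm{Id}$ — gives $\pi_\ast^T(\mathbb{L}^{\otimes(j+kn)}) = \mathcal{F}_j \otimes \mathbb{M}^{\otimes n}$. Hence $q_j(n) = \chi\big((G/P)//_\mathbb{L}T,\ \mathcal{F}_j \otimes \mathbb{M}^{\otimes n}\big)$.

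Now I would apply Proposition \ref{aRR} on the projective variety $(G/P)//_\mathbb{L}T$ with coherent sheaf $\mathcal{F}_j$ and ample line bundle $\mathbb{M}$. To see $\mathbb{M}$ is ample: choose via Theorem \ref{GITquotient}(2) some $m \geq 1$ with $\mathbb{L}^{\otimes m}$ descending to an ample bundle $\mathbb{M}_0$; uniqueness of descended bundles (a consequence of $\pi_\ast^T \circ \pi^\ast = \mathrm{Id}$) forces $\mathbb{M}^{\otimes m} \cong \mathbb{M}_0^{\otimes k}$, which is ample, so $\mathbb{M}$ is ample. Proposition \ref{aRR} then shows $q_j(n)$ is a polynomial in $n$ of degree $\dim \op{Supp}(\mathcal{F}_j)$. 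To identify this degree, note the GIT quotient is irreducible, being the image under the surjective map $\pi$ of the nonempty open subset $X^{ss}(\mathbb{L})$ of the irreducible variety $G/P$ (nonemptiness by Proposition \ref{nonemptystable}). Moreover $\mathcal{F}_j = \pi_\ast^T(\mathbb{L}^{\otimes j})$ is a nonzero rank-one reflexive sheaf: for $j \geq 1$ its space of global sections has dimension $\dim V_{j\lambda}(j\mu) \geq 1$ since $(\lambda,\mu)$ is primitive (hence $j\mu \leq j\lambda$), while $\mathcal{F}_0 = \mathcal{O}$. A nonzero rank-one sheaf on an irreducible variety has full support, so $\dim \op{Supp}(\mathcal{F}_j) = \dim\big((G/P)//_\mathbb{L}T\big)$, which by Corollary \ref{dimensioncount} is the asserted value.

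The step I expect to require the most care is the passage to the quotient: confirming that Teleman's cohomology comparison applies to the twisted bundle $\mathbb{L}^{\otimes(j+kn)}$ and not only to the polarization $\mathbb{L}$ itself, and verifying the projection formula $\pi_\ast^T(\mathbb{L}^{\otimes j} \otimes \pi^\ast \mathbb{M}^{\otimes n}) = \mathcal{F}_j \otimes \mathbb{M}^{\otimes n}$ for the invariant direct image. A secondary point is ensuring $\mathcal{F}_j$ is nonzero with full support, since otherwise the degree delivered by Proposition \ref{aRR} could fall short of $\dim\big((G/P)//_\mathbb{L}T\big)$.
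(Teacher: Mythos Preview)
Your proof is correct and follows essentially the same route as the paper: identify $q_j(n)$ with a $T$-invariant Euler characteristic via Borel--Weil, transport to the GIT quotient using Teleman's comparison and the identification $\pi_\ast^T(\mathbb{L}^{\otimes(j+kn)}) = \mathcal{F}_j \otimes \mathbb{M}^{\otimes n}$, then apply asymptotic Riemann--Roch and the reflexivity of $\mathcal{F}_j$ to pin down the degree. If anything, you are more careful than the paper on a couple of points it leaves implicit---the ampleness of $\mathbb{M}$, the nonvanishing of $\mathcal{F}_j$, and the projection formula justifying $\pi_\ast^T(\mathbb{L}^{\otimes j}\otimes\pi^\ast\mathbb{M}^{\otimes n}) = \mathcal{F}_j\otimes\mathbb{M}^{\otimes n}$---but the architecture is identical.
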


\begin{proof}
Let $\mathbb{L}=L_\lambda \otimes \C_\mu$ as above, with $k \geq 1$ chosen so that $\mathbb{L}^{\otimes k}$ descends to $\mathbb{M}$ on $(G/P)//_\mathbb{L} T$. Then by Proposition \ref{aRR}, for the sheaf $\mathcal{F}_j$ and $\mathbb{M}$, we have that 
$$
n \mapsto \chi(\mathcal{F}_j \otimes \mathbb{M}^{\otimes n})
$$
is a polynomial in $n$ of degree $\dim \op{Supp}(\mathcal{F}_j)$; since $\mathcal{F}_j$ is reflexive, it has full support so that this polynomial is of degree $\dim ((G/P) //_{\mathbb{L}} T)$. Now, since $\mathbb{M}=\pi_\ast^T(\mathbb{L}^{\otimes k})$ is a line bundle, we have by the projection formula that 
$$
\mathcal{F}_j \otimes \mathbb{M}^{\otimes n} = \pi_*^T( \mathbb{L}^{\otimes j} \otimes \pi^* (\mathbb{M}^{\otimes n}))=\pi_\ast^T (\mathbb{L}^{\otimes (j+kn)}).
$$

By Proposition \ref{cohomcompare}, we get that 
$$
\mathrm{H}^\ast((G/P)//_{\mathbb{L}} T, \mathcal{F}_j \otimes \mathbb{M}^{\otimes n}) \cong \mathrm{H}^\ast(G/P, \mathbb{L}^{\otimes (j+kn)})^T.
$$
By the higher cohomology vanishing of the Borel--Weil theorem, the only nontrivial terms are the global sections, and so
$$
\dim \mathrm{H}^0((G/P)//_{\mathbb{L}} T, \mathcal{F}_j \otimes \mathbb{M}^{\otimes n}) = \dim \mathrm{H}^0(G/P, \mathbb{L}^{\otimes (j+kn)})^T = \dim V_{(j+kn)\lambda}((j+kn)\mu)
$$
is a polynomial in $n$ of degree $\dim ((G/P)//_{\mathbb{L}} T)$, as desired. 
\end{proof}

Finally, we can collect these polynomials and arrange them into a quasi-polynomial, completing the proof of Theorem \ref{MainTheorem1}. 

\begin{theorem} \label{MT1Restated}
For $G$ simple, simply-connected and $(\lambda, \mu)$ a primitive pair, the function $K_{\lambda, \mu}(N)$ is quasi-polynomial of degree $\dim ((G/P)//_{\mathbb{L}} T)$. 
\end{theorem}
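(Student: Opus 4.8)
The plan is to assemble Theorem \ref{MT1Restated} from the two ingredients already in place: Proposition \ref{Kquasipiece}, which says that along each arithmetic progression $n \mapsto (j+kn)$ the function $N \mapsto \dim V_{N\lambda}(N\mu)$ restricts to a genuine polynomial $q_j$ of degree $\dim((G/P)//_{\mathbb{L}} T)$, and Corollary \ref{dimensioncount}, which identifies that common degree with $\tfrac12|\Phi| - \tfrac12|\Phi'| - \mathrm{rank}(\Phi)$. First I would fix $k \geq 1$ such that $\mathbb{L}^{\otimes k}$ descends to the GIT quotient — such a $k$ exists by Theorem \ref{GITquotient}(2), since $(G/P)^{s}(\mathbb{L}) \neq \varnothing$ by Proposition \ref{nonemptystable} guarantees the quotient is well-behaved and the descended line bundle $\mathbb{M}$ makes sense.

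Next I would observe that the $k$ polynomials $q_0, q_1, \dots, q_{k-1}$ obtained from Proposition \ref{Kquasipiece} reassemble into a single function on $\mathbb{N}$: for $N \equiv j \pmod{k}$, write $N = j + kn$ and set $p_j(N) := q_j\!\left(\tfrac{N-j}{k}\right)$, which is again a polynomial in $N$ (of the same degree, since reparametrizing $n$ by an affine function of $N$ preserves degree). Then by construction $K_{\lambda,\mu}(N) = \dim V_{N\lambda}(N\mu) = p_j(N)$ whenever $N \equiv j \pmod k$, so $K_{\lambda,\mu}(N)$ is quasi-polynomial with period $k$ in the sense of the Definition. (This incidentally establishes Proposition \ref{MainTheorem2}, taking $k = d$ minimal.) Its degree, by the Definition, is the maximum of the degrees of $p_0, \dots, p_{k-1}$; but each $p_j$ has degree exactly $\dim((G/P)//_{\mathbb{L}} T)$ by Proposition \ref{Kquasipiece}, so the degree of $K_{\lambda,\mu}(N)$ is precisely $\dim((G/P)//_{\mathbb{L}} T)$. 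Finally, Corollary \ref{dimensioncount} rewrites this as $\tfrac12|\Phi| - \tfrac12|\Phi'| - \mathrm{rank}(\Phi)$, which is the formula of Theorem \ref{MainTheorem1}; combined with the reduction to primitive pairs from \cite{GG}*{Proposition 3.4, Corollary 3.5} this also yields the Corollary, and Theorem \ref{GGClassical} for all semisimple $G$.

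There is essentially no remaining obstacle here — the theorem is a bookkeeping step that packages the per-residue-class polynomials of Proposition \ref{Kquasipiece} into the quasi-polynomial language of the Definition. The only point requiring a word of care is that the constituent polynomials all share the same degree, so that no cancellation or degree drop can occur when passing to the maximum; this is immediate because each $q_j$ has the stated degree with no dependence on $j$, reflexivity of $\mathcal{F}_j$ being what forces full support regardless of $j$. I would also remark that the argument a priori only shows $k$ is \emph{a} period, not the minimal one, and point forward to Section \ref{Descent-Examples} for the discussion of the minimal period.
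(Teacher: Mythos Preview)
Your proposal is correct and follows essentially the same route as the paper: fix $k$ so that $\mathbb{L}^{\otimes k}$ descends, take the polynomials $q_j$ from Proposition~\ref{Kquasipiece}, reparametrize via $p_j(N)=q_j\!\left(\tfrac{N-j}{k}\right)$, and note that each $p_j$ has degree exactly $\dim((G/P)//_{\mathbb{L}} T)$. The extra remarks you add (on reflexivity forcing full support, and on $k$ being only \emph{a} period) are consistent with the paper's surrounding discussion.
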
 

\begin{proof}
As in Proposition \ref{Kquasipiece}, set $q_j(n)=\dim V_{(j+kn)\lambda}((j+kn)\mu)$, a polynomial in $n$, for $0 \leq j < k$. Set $p_j(N):=q_j(\frac{N-j}{k})$, which is a polynomial in $N$ of the same degree. Then if $N \equiv j  \mod k$, writing $N=j+nk$ we get that 
$$
K_{\lambda, \mu}(N) = \dim V_{N\lambda}(N\mu)= \dim V_{(j+nk)\lambda}((j+nk)\mu) = q_j(n) = q_j\left(\frac{N-j}{k}\right) = p_j(N),
$$
so that $p_0(N), p_1(N), \dots, p_{k-1}(N)$ defines a quasi-polynomial for $K_{\lambda, \mu}(N)$ of period $k$ and degree $\dim ((G/P)//_{\mathbb{L}} T)$, as each individual $p_i(N)$ has this degree. 
\end{proof}

We end this section with a few remarks. As previously discussed, Theorem \ref{MT1Restated} with the restriction to primitive pairs $(\lambda, \mu)$ was proven by Gao and Gao \cite{GG} in classical types using the combinatorics of Berenstein--Zelevinsky polytopes and Ehrhart theory. However, using similar combinatorics along with polytopes coming from the path model, Dehy \cite{Dehy} also derives the quasi-polynomality of the dimensions of weight spaces $V_{N \lambda}(N\mu)$ in $N$; this is done even more generally in the setting of Demazure modules for symmetrizable Kac--Moody algebras. Applying the results of \cite{Dehy} to the present case, for any $(\lambda, \mu)$ not necessarily primitive, one can conclude that $K_{\lambda, \mu}(N)$ is quasi-polynomial of degree bounded by that in Theorem \ref{MT1Restated}. This is also briefly explained therein in more geometric language (attributed to M. Brion), although with fewer details.

The key advantage to primitive $(\lambda, \mu)$ is that they realize the \emph{maximum} possible degree among the stretched Kostka quasi-polynomials. Along with Proposition \ref{nonemptystable} which proved this fact, we mention that an alternative view of this result is that the primitive condition on $(\lambda, \mu)$ ensures that the line bundle $\mathbb{L}$ is in the relative interior of the \emph{$T$-ample cone} $\mathcal{C}^{T}(G/P_\lambda)$ (c.f. \cite{DH}).

\section{Descent of line bundles on flag varieties and periods of stretched Kostka quasi-polynomials} \label{Descent-Examples}

The proof of Theorem \ref{MT1Restated} establishes the quasi-polynomality of $K_{\lambda, \mu}(N)$, with period $k$, for any choice of $k \geq 1$ such that $\mathbb{L}^{\otimes k}$ descended to the GIT quotient. Often, when considering a quasi-polynomial $f(N)$, one would like to know the \emph{minimal} such period. In the case of Kostka quasi-polynomials, a natural candidate for this would be the minimal power of $\mathbb{L}$ which descends to the GIT quotient. 

By way of motivation, in \cite{Kumar1} Kumar determines precisely for which dominant weights $\lambda$ the line bundle $L_\lambda$ descends to the GIT quotient $(G/P_\lambda)//_{L_\lambda} T$. These correspond to a certain lattice $\Gamma$ depending on $G$, which we refer to as the \emph{descent lattice}. This was later used in by Kumar--Prasad \cite{KP} to study the quasi-polynomality of zero weight spaces as the highest weight $\lambda$ varies in the root lattice $Q$. We recall the classification of these lattices $\Gamma$ in the following proposition.

\begin{proposition} \label{DescentLattice}

Let $\lambda \in X^\ast(T)^+$ be a dominant integral weight. Then the line bundle $L_\lambda$ descends to the GIT quotient $(G/P_\lambda) //_{L_\lambda} T$ if and only if $\lambda \in \Gamma$, where $\Gamma$ is the lattice

\begin{itemize}
\item[(a)] $Q$, if $G$ is of type $A_\ell$ ($\ell \geq 1$).
\item[(b)] $2Q$, if $G$ is of type $B_\ell$ ($\ell \geq 3$).
\item[(c)] $\Z 2\alpha_1+\cdots + \Z 2 \alpha_{n-1} + \Z \alpha_n$, if $G$ is of type $C_\ell$ ($\ell \geq 2$).
\item[(d1)] $\{n_1 \alpha_1+2n_2\alpha_2 + n_3 \alpha_3+n_4 \alpha_4 : n_i \in \Z \text{ and } n_1+n_3+n_4 \in 2\Z \}$, if $G$ is of type $D_4$.
\item[(d2)] $\{2n_1\alpha_1 +2n_2\alpha_2+\cdots+2n_{\ell-2} + n_{\ell-1}\alpha_{\ell-1} + n_\ell \alpha_\ell : n_i \in \Z \text{ and } n_{\ell-1}+n_\ell \in 2\Z\}$, if $G$ is of type $D_\ell$ ($\ell \geq 5$).
\item[(e)] $\Z 6 \alpha_1 + \Z 2 \alpha_2$, if $G$ is of type $G_2$. 
\item[(f)] $\Z6\alpha_1+\Z6\alpha_2+\Z12\alpha_3+\Z12\alpha_4$, if $G$ is of type $F_4$.
\item[(g)] $6X^\ast(T)$, if $G$ is of type $E_6$.
\item[(h)] $12X^\ast(T)$, if $G$ is of type $E_7$.
\item[(i)] $60Q$, if $G$ is of type $E_8$.
\end{itemize}

\end{proposition}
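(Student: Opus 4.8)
The strategy is to deduce this classification from Kempf's descent lemma (\cite{DN}*{Theorem 2.3}) applied to $X=G/P_\lambda$ with its $T$-action and the ample line bundle $L_\lambda$. Each of the lattices $\Gamma$ in (a)--(i) is a sublattice of the root lattice $Q$, and the heart of the matter is the torsion in $T$ that arises as the isotropy group of a semistable point with closed orbit.

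By Kempf's descent lemma, $L_\lambda$ descends to $(G/P_\lambda)//_{L_\lambda}T$ if and only if for every $x\in(G/P_\lambda)^{ss}(L_\lambda)$ whose orbit $T\cdot x$ is closed in the semistable locus, the isotropy group $T_x$ acts trivially on the fibre $(L_\lambda)_x$. First I would render all the ingredients of this criterion combinatorial, via the embedding $i_\lambda\colon G/P_\lambda\hookrightarrow\mathbb{P}(V_\lambda)$ and Proposition \ref{computeMumford}. Writing $i_\lambda(x)=\bigl[\sum_{\gamma\in S}v_\gamma\bigr]$ with each $v_\gamma$ a nonzero vector of weight $\gamma$ and $S=S(x)\subseteq\op{wt}(V_\lambda)$ the set of weights actually occurring, one checks that $x$ is \emph{semistable} iff $0\in\mathrm{Conv}(S(x))$; that the orbit $T\cdot x$ is \emph{closed} in $(G/P_\lambda)^{ss}(L_\lambda)$ iff $0$ lies in the \emph{relative interior} of $\mathrm{Conv}(S(x))$; and that $T_x=\bigcap_{\gamma,\gamma'\in S}\ker(\gamma-\gamma')$, acting on $(L_\lambda)_x$ through $t\mapsto\gamma_0(t)^{-1}$ for any fixed $\gamma_0\in S$ (the elements of $S$ all agreeing on $T_x$). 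Choosing positive integers $b_\gamma$ with $\sum_{\gamma\in S}b_\gamma\gamma=0$ --- possible exactly because $0$ lies in the relative interior --- shows that $\gamma_0|_{T_x}$ is trivial on the identity component $T_x^\circ$ and has order dividing $\sum_\gamma b_\gamma$; so the descent condition at $x$ is the single requirement $\gamma_0|_{T_x}=1$, a concrete divisibility constraint relating $\lambda$ to the finite group $T_x/T_x^\circ$.

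Next I would bundle these conditions together. As $x$ varies, only finitely many subgroups occur as such $T_x$ (each being determined by the finite set $S\subseteq\op{wt}(V_\lambda)$), they generate a finite collection of torsion subgroups of $T$, and $L_\lambda$ descends exactly when $\lambda$ is trivial on all of them; the set of such $\lambda$ is visibly a sublattice $\Gamma\subseteq Q$, and the minimal $d\geq1$ with $L_\lambda^{\otimes d}$ descending is the order of $\lambda$ modulo $\Gamma$. A useful further reduction is that it suffices to check a handful of minimal configurations $S$ --- those supported on a proper sub-root-system with $0$ only barely inside the relative interior of $\mathrm{Conv}(S)$ --- whose stabilizers are generated by explicit torsion elements of $T$ built from rescaled fundamental coweights $\check\varpi_j/m$ for small $m$, read off from the combinatorics of $\Phi$.

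The principal obstacle --- and by far the longest part of the argument --- is the type-by-type verification that these divisibility conditions carve out exactly the lattices (a)--(i). This is where the specific integers surface: the overall factor $2$ in type $B$; the pattern of doubling every simple root but the long one in type $C$ (with the analogous $D_4$ and $D_\ell$ lattices); the coefficients $6,2$ in $G_2$ and $6,6,12,12$ in $F_4$; and the scalings $6$, $12$, $60$ of $X^\ast(T)$ in types $E_6,E_7,E_8$. Carrying this out amounts to enumerating the minimal weight configurations (those with $0$ in the relative interior of their convex hull) in each root system --- immediate in the classical types, where it reduces to recording which simple roots can be so doubled, and a finite search, well suited to computer algebra, in the exceptional types --- computing the order of the associated stabilizer in each case, and intersecting the resulting conditions. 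In type $A_\ell$ no condition survives beyond $\lambda\in Q$, giving $\Gamma=Q$; in every other type the finer structure of $\Phi$ (multiple root lengths, a nontrivial fundamental group, or simply the rich torsion of $T$ detected by higher-rank configurations in type $E_8$) forces the proper sublattice listed. Since this case analysis is precisely the one performed in \cite{Kumar1}, at that point I would invoke that reference rather than reproduce it.
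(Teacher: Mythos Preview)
The paper does not give its own proof of this proposition: it is quoted verbatim from \cite{Kumar1} and simply cited, with no argument supplied. Your proposal correctly identifies the strategy behind Kumar's proof --- Kempf's descent lemma reduces descent to triviality of the isotropy action on fibres at closed-orbit semistable points, which in turn becomes a combinatorial divisibility condition on $\lambda$ relative to the torsion in the stabilizers $T_x$, and these conditions are then worked out type by type --- and since you ultimately defer to \cite{Kumar1} for the case analysis, your treatment is entirely consistent with (indeed more expansive than) the paper's.
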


Note that for $G$ simple, the pair $(\lambda, 0)$ is always primitive for any dominant integral weight $\lambda$, as the inverse of a simple Cartan matrix has strictly positive entries (cf. \cite{LT}). Applying this result, we have the following corollary concerning periods of stretched Kostka quasi-polynomials $K_{\lambda, 0}(N)$.

\begin{corollary} \label{MT2Restated}
Let $\lambda$ be a dominant integral weight with $\lambda \geq 0$. Let $d \geq 1$ be the minimal integer such that $d\lambda \in \Gamma$. Then the stretched Kostka quasi-polynomial $K_{\lambda, 0}(N)$ has period $d$.

 In particular, if $G=SL_n(\C)$, then this recovers the $\mu=0$ case of the polynomality of stretched Kostka numbers due to Kirillov--Reshetikhin \cite{KR}, and if $G$ is of classical type, then $K_{\lambda, 0}(N)$ is quasi-polynomial of period at most $2$.
\end{corollary}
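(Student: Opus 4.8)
The plan is to combine Proposition \ref{MainTheorem2} (or rather Proposition \ref{MT1Restated}/\ref{MT2Restated}'s proof machinery) with the explicit description of the descent lattice $\Gamma$ in Proposition \ref{DescentLattice}. First I would note that for $\mu = 0$ the relevant line bundle is $\mathbb{L} = L_\lambda \otimes \C_0 = L_\lambda$, so that the question of which power of $\mathbb{L}$ descends to $(G/P_\lambda)//_{L_\lambda} T$ is governed directly by Proposition \ref{DescentLattice}: the power $\mathbb{L}^{\otimes m} = L_{m\lambda}$ descends if and only if $m\lambda \in \Gamma$. Thus the minimal such $m$ is exactly the integer $d$ in the statement, and by Proposition \ref{MainTheorem2} this $d$ is a period of $K_{\lambda, 0}(N)$. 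To upgrade "a period" to "the period" one must also check minimality, i.e. that $K_{\lambda,0}(N)$ is genuinely not quasi-polynomial of any smaller period — here I would either invoke the geometric interpretation (that a strictly smaller period would force a strictly smaller power of $\mathbb{L}$ to descend, contradicting minimality of $d$), or more carefully argue via the structure of the descent lemma that the failure of descent of $L_{m\lambda}$ for $m < d$ obstructs period $m$; this minimality claim is the point I would treat most carefully.

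Next I would handle the two special cases. For $G = SL_n(\C)$, i.e. type $A_{\ell}$, Proposition \ref{DescentLattice}(a) gives $\Gamma = Q$, the root lattice. Since $(\lambda,0)$ with $\lambda \geq 0$ means precisely $\lambda \in Q^+ \subseteq Q = \Gamma$, we already have $d = 1$, so $K_{\lambda,0}(N)$ is a genuine polynomial; this recovers the Kirillov--Reshetikhin result \cite{KR} in the $\mu = 0$ case. (One should remark that the hypothesis $\lambda \geq 0$ is exactly what places $\lambda$ in $\Gamma$ here.) For $G$ of classical type $B_\ell$, $C_\ell$, or $D_\ell$, I would inspect cases (b), (c), (d1), (d2) of Proposition \ref{DescentLattice} and observe that in every case $2Q \subseteq \Gamma$: for type $B$ this is immediate since $\Gamma = 2Q$; for type $C$, $2\alpha_i \in \Gamma$ for $i < \ell$ and $2\alpha_\ell \in \Gamma$ trivially since already $\alpha_\ell \in \Gamma$; for $D_4$ and $D_\ell$ ($\ell \geq 5$) one checks that $2\alpha_i$ lies in the listed lattice for each $i$ (the parity conditions $n_{\ell-1} + n_\ell \in 2\Z$, etc., are automatically satisfied when all coefficients are even). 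Hence for any $\lambda \geq 0$ (so $\lambda \in Q$) we have $2\lambda \in 2Q \subseteq \Gamma$, giving $d \mid 2$, i.e. period at most $2$.

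The main obstacle I anticipate is the minimality half of the first claim: Proposition \ref{MainTheorem2} only asserts that $d$ (the minimal descent power) \emph{is a} period, and one needs the converse — that no smaller integer is a period. The cleanest route is probably to argue that if $e < d$ were a period of $K_{\lambda,0}(N)$, then by the Borel--Weil identification $\dim \mathrm{H}^0(G/P_\lambda, L_{m\lambda})^T$ would, as a function of $m$ in each residue class mod $e$, agree with the Hilbert function of a genuine line bundle on the GIT quotient — forcing $L_{e\lambda}$ (or a subpower) to descend, contradicting minimality of $d$ via Kempf's descent lemma. If making this rigorous proves delicate, an acceptable fallback is to state the corollary with "$d$ is a period" rather than "$d$ is the period," preserving the two concrete consequences for $SL_n$ and classical types, which only require the upper bounds $d = 1$ and $d \mid 2$ respectively and so are unaffected.
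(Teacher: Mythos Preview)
Your approach is essentially the same as the paper's: the corollary is obtained by combining Proposition~\ref{MainTheorem2} (minimal descent power gives a period) with the explicit descent lattices of Proposition~\ref{DescentLattice}, and your verification of the $SL_n$ and classical-type consequences via $\Gamma = Q$ and $2Q \subseteq \Gamma$ is correct and matches the intended argument.

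The one point to correct is your reading of the statement. In this paper, ``has period $d$'' means only that $d$ is \emph{a} period (see Definition~1.1: ``Note that $d$ need not be minimal''), not that $d$ is the minimal period. Indeed, immediately after this corollary the paper explicitly raises as an open question whether the $d$ produced here is always minimal. So your entire discussion of upgrading ``a period'' to ``the period'' --- and the proposed argument that a smaller period would force a smaller power of $\mathbb{L}$ to descend --- is unnecessary for this corollary, and in fact that converse implication is not established anywhere in the paper. Your fallback reading is the correct one, and with it your proof is complete.
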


%

Following Corollary \ref{MT2Restated}, we can ask two natural questions: first, is the period $d$ produced above always the \emph{minimal} period for the quasi-polynomial $K_{\lambda, 0} (N)$, and second, is there a similar easy determination of a period for general pairs $(\lambda, \mu)$? To answer the second question, one would like to first have a handle on (at least sufficient) conditions for the line bundle $\mathbb{L}:=L_\lambda \otimes \C_\mu$ to descend to the GIT quotient, in terms of $\lambda$ and $\mu$. This seems more subtle than the $\mu=0$ case, and remains an open question of interest. For the first question (even in the case of general pairs), while the minimal integer $d$ such that the line bundle $\mathbb{L}^{\otimes d}$ descends to the GIT quotient is a natural candidate of minimal period, we do not know how to explicitly rule out a smaller divisor of $d$ being a period of $K_{\lambda, \mu}(N)$. 

Nevertheless, as in \cite{GG}*{Proposition 3.4, Corollary 3.5}, we can always reduce these questions to the case of primitive pairs by taking appropriate projections onto the weight lattices of certain Levi subgroups. For general $(\lambda, \mu)$, a period for $K_{\lambda, \mu}(N)$ would then be given by the least common multiple of the periods of each of its primitive factors. Using this reduction, we offer the following question, which is based in part on a hopeful analogy with Corollary \ref{MT2Restated}, as a candidate for a period of $K_{\lambda, \mu}(N)$ which generalizes the $\mu=0$ case. We include in the subsequent subsections some computational examples as support of an affirmative answer to this question. 

\begin{question} \label{PeriodConjecture}
Let $G$ be a simple, simply-connected complex algebraic group and $(\lambda, \mu)$ a primitive pair of dominant integral weights. Let $d \geq 1$ be the minimal integer such that $d(\lambda-\mu) \in \Gamma$. Then does the stretched Kostka quasi-polynomial $K_{\lambda, \mu}(N)$ have $d$ as a (minimal) period? 
\end{question}

\subsection{Examples and computational method}
We conclude with some explicit instructive examples, which were computed via Sage \cite{TSD}. All results are simply informed guesses at the quasi-polynomial $K_{\lambda,\mu}(N)$. From basic linear algebra, we know that any $n$ points can be interpolated by a degree $n-1$ polynomial. It is relatively much ``harder'' (measure $0$ probability) for those $n$ points to be interpolated by a degree $n-2$ polynomial. 

Now supposing we have an integer sequence $(a_N)$ which is known to be the outputs of a polynomial function $f(N)=a_N$, with $\deg(f)$ unknown. If the first $n$ points of the sequence are interpolated by a degree $n-2$ polynomial, it is suggestive that the same polynomial predicts all terms of the sequence. This is, of course, not a guarantee, but still compelling evidence. 

In the calculations reported below, a sample of $k$ points of the sequence $K_{N\lambda,N\mu}$  was collected using {\tt LiE} \cite{lie}. From there, for each possible choice of positive integer $d$ such that $k/d\ge 2$, we split the terms of the sequence up into $d$ groups of at least $n=\lfloor k/d \rfloor$ terms each according to the residue class of the index modulo $d$. If each group of $n$ terms defines a polynomial of degree at most $n-2$, we decide there is compelling evidence that a quasi-polynomial of period $d$ and degree $n$ defines the sequence which we have sampled. If no such quasi-polynomial could be found, we increase $k$ and try again.  We emphasize that, in the computation of these examples, we do \emph{not} give as input the degree of the quasi-polynomial $K_{\lambda, \mu}(N)$ as determined by Theorem \ref{MainTheorem1}. That the computational methods produce quasi-polynomials of the correct degree and proposed period (and not smaller) is further support for Question \ref{PeriodConjecture}.

\subsubsection{An example from type $G_2$}

Let $G$ be of type $G_2$, $\lambda = \varpi_1+3\varpi_2$, and $\mu = \varpi_2$. Then $\lambda - \mu = 8 \alpha_1 + 5\alpha_2$, and the smallest multiple of $\lambda - \mu$ that lands in $\Gamma$ is $6(\lambda - \mu)$. 

For the first $k=36$ data points, we found that $K_{\lambda,\mu}(N)$ agrees with the quasipolynomial defined by 

$$
\left\{
\begin{array}{rc}

1   + 3N  + \frac{205}{36}N^2 + \frac{160}{27}N^3 + \frac{61}{27}N^4, & N \equiv 0  \mod 6 \\ \\
\frac{17}{108}  + \frac{53}{27}N +\frac{205}{36}N^2 + \frac{160}{27}N^3 + \frac{61}{27}N^4, & N \equiv 1  \mod 6 \\ \\
\frac{19}{27}  + \frac{67}{27}N + \frac{205}{36}N^2 + \frac{160}{27}N^3 + \frac{61}{27}N^4, & N \equiv 2  \mod 6 \\ \\
\frac{3}{4}  + 3N + \frac{205}{36}N^2 + \frac{160}{27}N^3 + \frac{61}{27}N^4, & N \equiv 3  \mod 6 \\ \\
\frac{11}{27}  + \frac{53}{27}N + \frac{205}{36}N^2 + \frac{160}{27}N^3 + \frac{61}{27}N^4, & N \equiv 4  \mod 6 \\ \\
\frac{49}{108}  + \frac{67}{27}N + \frac{205}{36}N^2 + \frac{160}{27}N^3 + \frac{61}{27}N^4, & N \equiv 5  \mod 6 \\ 

\end{array}
\right.
$$
~\\

This quasipolynomial has degree $4$, as it must given that $\Phi' = \emptyset$ and 
$$
\frac{1}{2} |\Phi| - \mathrm{rank}(\Phi) = \frac{1}{2}(12)-2 = 4.
$$

\subsubsection{An example from type $B_3$}

Let $G = \mathrm{Spin}(7)$, $\lambda = \varpi_1+\varpi_2+\varpi_3$, and $\mu = \varpi_3$. Then $\lambda - \mu = 2\alpha_1 + 3\alpha_2 + 3\alpha_3$. While $\lambda - \mu \not\in \Gamma$, $2(\lambda - \mu) \in \Gamma$, so we anticipate a period of $2$ to the quasipolynomial. 

For the first $k=16$ data points, the sequence $K_{\lambda,\mu}(N)$ agrees with the function 

$$
\left\{
\begin{array}{rc}
1 + \frac{113}{40}N + \frac{2057}{480}N^2 + \frac{63}{16}N^3 + \frac{859}{384} N^4 + \frac{457}{640} N^5 + \frac{91}{960} N^6, & N \equiv 0 \mod 2 \\ \\ 
\frac{85}{128} + \frac{1553}{640}N + \frac{961}{240}N^2 + \frac{247}{64}N^3 + \frac{859}{384} N^4 + \frac{457}{640} N^5 + \frac{91}{960} N^6, & N \equiv 1 \mod 2 \\ \\ 
\end{array}
\right. 
$$

The degree of this quasipolynomial is $6$, and given that $\Phi' = \emptyset$, this is the correct degree, for 
$$
\frac{1}{2}|\Phi| - \mathrm{rank}(\Phi) = \frac{1}{2}(18) - 3 = 6.
$$

\subsubsection{An example from type $D_4$}

Let $G = \mathrm{Spin}(8)$, $\lambda = \varpi_2$, and $\mu = 0$. We have $\lambda - \mu = \alpha_1+2\alpha_2+\alpha_3+\alpha_4$. Note that $2(\lambda-\mu) \in \Gamma$, and $d = 2$ yields the smallest multiple to get into the descent lattice. 

For the first $k=14$ data points, we have agreement of $K_{\lambda,\mu}(N)$ with the quasipolynomial

$$
\left\{
\begin{array}{rc}
1 + \frac{101}{60}N + \frac{25}{24} N^2 + \frac{5}{16} N^3 + \frac{5}{96} N^4 + \frac{1}{240} N^5, & N \equiv 0 \mod 2 \\\\
\frac{29}{32} + \frac{101}{60}N + \frac{25}{24} N^2 + \frac{5}{16} N^3 + \frac{5}{96} N^4 + \frac{1}{240} N^5, & N \equiv 1 \mod 2 \\
\end{array}
\right.
$$
~\\

The degree is $5$, which is correct given that $\Phi'$ is of type $A_1^{\times 3}$ and hence 
$$
\frac{1}{2}|\Phi| - \frac{1}{2}|\Phi'| - \mathrm{rank}(\Phi) = \frac{1}{2}(24) - \frac{1}{2}(6) - 4 = 5.
$$

\subsubsection{An example from type $F_4$}

Let $G$ be of type $F_4$, $\lambda = 2 \varpi_1$, and $\mu = 0$. We have $\lambda - \mu = 4 \alpha_1 +6\alpha_2 +8\alpha_3 +4\alpha_4$. Note that $3(\lambda-\mu) \in \Gamma$, and $d=3$ is the smallest multiple to achieve this. 

The quasipolynomial $K_{\lambda,\mu}(N)$ agrees with the functions $\sum c_i N^i$ defined by the table below, depending on the remainder of $N$ modulo $3$, to the first $k=39$ data points. 

$$
\begin{array}{r|rrrrrrrrrrrr}
 & c_0 & c_1 & c_2 & c_3 & c_4 & c_5 & c_6 & c_7 & c_8 & c_9 & c_{10} & c_{11} \\ \hline \\
 N \equiv 0 \mod 3 & 1 & \frac{4111}{1386}    & \frac{87911}{18900}  & \frac{555223}{113400}  & \frac{18533}{4860} 
 & \frac{4397}{1944} & \frac{6184}{6075} & \frac{4828}{14175} & \frac{232}{2835} & \frac{68}{5103} & \frac{8}{6075} & \frac{4}{66825} \\ \\

  N\equiv 1 \mod 3 & \frac{697}{729} & \frac{4111}{1386}    & \frac{87911}{18900}  & \frac{555223}{113400}  & \frac{18533}{4860} 
 & \frac{4397}{1944} & \frac{6184}{6075} & \frac{4828}{14175} & \frac{232}{2835} & \frac{68}{5103} & \frac{8}{6075} & \frac{4}{66825} \\ \\
 N\equiv 2 \mod 3 & \frac{665}{729} & \frac{4111}{1386}    & \frac{87911}{18900}  & \frac{555223}{113400}  & \frac{18533}{4860} 
 & \frac{4397}{1944} & \frac{6184}{6075} & \frac{4828}{14175} & \frac{232}{2835} & \frac{68}{5103} & \frac{8}{6075} & \frac{4}{66825} \\ 
\end{array}
$$
~\\

The degree of the quasi-polynomial is $11$, as it must be. Note that $\Phi'$ is the Levi root system on nodes $2$, $3$, and $4$, hence of type $C_3$ with $|\Phi'| = 18$. The degree is therefore
$$
\frac{1}{2} |\Phi| - \frac{1}{2} |\Phi'| - \mathrm{rank}(\Phi) = \frac{1}{2}(48) - \frac{1}{2}(18) - 4 = 11.
$$

\end{document}